\documentclass[12pt,reqno]{amsart}
\usepackage[colorlinks=true, pdfstartview=FitV, linkcolor=blue, citecolor=blue, urlcolor=blue]{hyperref}
\usepackage{amssymb,amsmath, amscd}
\usepackage{ragged2e}
\usepackage{times, verbatim}
\usepackage{mathdots}
\usepackage{array}
\usepackage{graphicx}
\usepackage[english]{babel}
 \usepackage[usenames, dvipsnames]{color}
\usepackage{amsmath,amssymb,amsfonts}
\usepackage{enumerate, enumitem}
\usepackage{MnSymbol}
\usepackage{anysize}
\usepackage{enumitem}
\usepackage{bigints}
\usepackage{braket}
\usepackage[english]{babel}
\usepackage{blindtext}
\usepackage{mathtools}
\usepackage{graphicx}
\usepackage{tikz-cd}
\usepackage[a4paper,verbose]{geometry}
\marginsize{2.5cm}{2.5cm}{2.5cm}{2.5cm}
\input xy
\xyoption{all}
\usepackage{pb-diagram}
\usepackage[all]{xy}
\input xy
\xyoption{all}
\usepackage{scalerel}

\DeclareFontFamily{OT1}{rsfs}{}
\DeclareFontShape{OT1}{rsfs}{n}{it}{<-> rsfs10}{}
\DeclareMathAlphabet{\mathscr}{OT1}{rsfs}{n}{it}
\DeclareRobustCommand\longtwoheadrightarrow
     {\relbar\joinrel\twoheadrightarrow}

\numberwithin{equation}{section}
\numberwithin{equation}{subsection}
\renewcommand*{\theequation}{%
  \ifnum\value{subsection}=0 %
    \thesection
  \else
    \thesubsection
  \fi
  .\arabic{equation}%
}

\newtheorem{thm}[equation]{Theorem}
\newtheorem{prop}[equation]{Proposition}
\newtheorem{corollary}[equation]{Corollary}

\newtheorem{lemma}[equation]{Lemma}
\newtheorem{defn}[equation]{Definition}

\theoremstyle{definition}
\newtheorem*{funding}{Funding}

\theoremstyle{remark}

\begin{document}
\theoremstyle{plain}

\makeatletter
\DeclareRobustCommand{\bigboxplus@}{%
  \mathop{\vphantom{\sum@}\scaleobj{0.8}{\scalerel*{\boxplus}{\sum}}}%
}
\newcommand{\bigboxplus}{\DOTSB\bigboxplus@\slimits@}
\DeclareRobustCommand{\bigboxtimes@}{%
  \mathop{\vphantom{\sum@}\scaleobj{0.8}{\scalerel*{\boxtimes}{\sum}}}%
}
\newcommand{\bigboxtimes}{\DOTSB\bigboxtimes@\slimits@}
\makeatother

\newcommand{\Hecke}{\mathcal{H}}
\newcommand{\Liea}{\mathfrak{a}}
\newcommand{\Cmg}{C_{\mathrm{mg}}}
\newcommand{\Cinftyumg}{C^{\infty}_{\mathrm{umg}}}
\newcommand{\Cfd}{C_{\mathrm{fd}}}
\newcommand{\Cinftyfd}{C^{\infty}_{\mathrm{ufd}}}
\newcommand{\sspace}{\Gamma \backslash G}
\newcommand{\PP}{\mathcal{P}}
\newcommand{\bfP}{\mathbf{P}}
\newcommand{\bfQ}{\mathbf{Q}}
\newcommand{\Siegel}{\mathfrak{S}}
\newcommand{\g}{\mathfrak{g}}
\newcommand{\A}{\mathbb{A}}
\newcommand{\Q}{\mathbb{Q}}
\newcommand{\Gm}{\mathbb{G}_m}
\newcommand{\Nm}{\mathbb{N}m}
\newcommand{\ii}{\mathfrak{i}}
\newcommand{\II}{\mathfrak{I}}

\newcommand{\kk}{\mathfrak{k}}
\newcommand{\nn}{\mathfrak{n}}
\newcommand{\tF}{\widetilde{F}}
\newcommand{\p}{\mathfrak{p}}
\newcommand{\m}{\mathfrak{m}}
\newcommand{\bb}{\mathfrak{b}}
\newcommand{\Ad}{{\rm Ad}\,}
\newcommand{\ttt}{\mathfrak{t}}
\newcommand{\frakt}{\mathfrak{t}}
\newcommand{\U}{\mathcal{U}}
\newcommand{\Z}{\mathbb{Z}}
\newcommand{\bfG}{\mathbf{G}}
\newcommand{\bfT}{\mathbf{T}}
\newcommand{\R}{\mathbb{R}}
\newcommand{\ST}{\mathbb{S}}
\newcommand{\h}{\mathfrak{h}}
\newcommand{\bC}{\mathbb{C}}
\newcommand{\C}{\mathbb{C}}
\newcommand{\N}{\mathbb{N}}
\newcommand{\qH}{\mathbb {H}}
\newcommand{\temp}{{\rm temp}}
\newcommand{\Hom}{{\rm Hom}}
\newcommand{\Aut}{{\rm Aut}}
\newcommand{\rk}{{\rm rk}}
\newcommand{\Ext}{{\rm Ext}}
\newcommand{\End}{{\rm End}\,}
\newcommand{\Ind}{{\rm Ind}}
\newcommand{\ind}{{\rm ind}}
\newcommand{\Irr}{{\rm Irr}}
\def\circG{{\,^\circ G}}
\def\M{{\rm M}}
\def\diag{{\rm diag}}
\def\Ad{{\rm Ad}}
\def\As{{\rm As}}
\def\wG{{\widehat G}}
\def\G{{\rm G}}
\def\SL{{\rm SL}}
\def\PSL{{\rm PSL}}
\def\GSp{{\rm GSp}}
\def\PGSp{{\rm PGSp}}
\def\Sp{{\rm Sp}}
\def\St{{\rm St}}
\def\GU{{\rm GU}}
\def\SU{{\rm SU}}
\def\U{{\rm U}}
\def\GO{{\rm GO}}
\def\GL{{\rm GL}}
\def\PGL{{\rm PGL}}
\def\GSO{{\rm GSO}}
\def\GSpin{{\rm GSpin}}
\def\GSp{{\rm GSp}}

\newcommand{\Ssp}{\mathcal S}
\newcommand{\W}{\mathcal W}

\newcommand{\Sym}{\operatorname{Sym}}
\newcommand{\wh}{\widehat\otimes}
\newcommand{\detabs}{|\det|}
\newcommand{\jet}{\operatorname{jet}}
\newcommand{\RS}{\operatorname{RS}}

\def\Gal{{\rm Gal}}
\def\SO{{\rm SO}}
\def\O{{\rm  O}}
\def\sym{{\rm sym}}
\def\St{{\rm St}}
\def\Sp{{\rm Sp}}
\def\tr{{\rm tr\,}}
\def\ad{{\rm ad\, }}
\def\Ad{{\rm Ad\, }}
\def\rank{{\rm rank\,}}

\def\Ext{{\rm Ext}}
\def\Hom{{\rm Hom}}
\def\Alg{{\rm Alg}}
\def\GL{{\rm GL}}
\def\SO{{\rm SO}}
\def\G{{\rm G}}
\def\U{{\rm U}}
\def\St{{\rm St}}
\def\Wh{{\rm Wh}}
\def\RS{{\rm RS}}
\def\ind{{\rm ind}}
\def\Ind{{\rm Ind}}
\def\csupp{{\rm csupp}}

\title[Exceptional poles of archimedean Asai $L$-functions]
{Exceptional poles of archimedean Asai $L$-functions}
\author[Akash Yadav]{Akash Yadav}
\address{(A. Yadav) Human-Centered Artificial Intelligence Research Institute, Ewha Womans University, Seoul 03760, Republic of Korea}
\email{akaschampion@ewha.ac.kr, akaschampion@gmail.com}

\subjclass[2020]{Primary 11F70; Secondary 20G20, 22E45}
\keywords{Archimedean Asai $L$-functions,  Exceptional Poles}

\begin{abstract}
Let $\pi$ be an irreducible generic representation of
$\GL_n(\mathbb{C})$. We classify the exceptional poles of the associated
Asai $L$-function $L(s,\pi,\mathrm{As})$ and give a representation-theoretic
characterization of them. When $\pi$ is in general position, we further
express $L(s,\pi,\mathrm{As})$ in terms of the exceptional $L$-factors
attached to the irreducible constituents occurring in the derivatives
of $\pi$.
\end{abstract}
\maketitle

\section{Introduction}\label{s1}

Cogdell and Piatetski-Shapiro introduced the notion of an exceptional
pole in local Rankin--Selberg theory \cite{CPS2017}. Such poles connect
the analytic behavior of local zeta integrals with properties of the
representations, including distinction. This connection comes from
the equivariant functionals given by the leading Laurent coefficients
at these poles.

The non-archimedean theory of exceptional poles has been studied extensively; see, for
example, \cite{CPS2017,Mat2015,Jo2020}. By contrast, the archimedean
theory remains considerably less developed. Recently, the classification of exceptional poles of
Rankin--Selberg integrals over $\R$ was completed
in \cite{JNY2026}. The purpose of this paper is to study exceptional poles of the Flicker
integrals \cite{Fli1993} representing the Asai \(L\)-function \(L(s,\pi,\mathrm{As})\),
where \(\pi\) is an irreducible generic representation of
\(\operatorname{GL}_n(\C)\). 

Fix $\pi$ as above. Let $\psi\colon\C\to \mathbb{S}^1$ be a nontrivial unitary additive character that is trivial on $\R$, and denote by $\mathcal W(\pi,\psi)$ the corresponding Whittaker model.
Write $\mathcal S$ for the Schwartz space on $\R^n$, let $e_n=(0,\dots,0,1)\in\R^n$, and let $N_n$ denote
the standard upper-triangular unipotent subgroup of $\operatorname{GL}_n$.  We consider the family of Flicker integrals $\mathcal{J}(\pi) = \{I(s,W,\Phi) \mid W \in \mathcal{W}(\pi, \psi), \Phi \in \mathcal{S}\}$, where
\[
I(s,W,\Phi) = \int_{N_n(\R) \backslash \operatorname{GL}_n(\R)} W(g)\Phi(e_n g)|\det(g)|_\R^s \, dg.
\]
By \cite[Theorem~1]{BP2021}, these integrals converge absolutely for
$\operatorname{Re}(s)$ sufficiently large and admit meromorphic
continuation to $\mathbb C$. Moreover, the normalized integrals
\[
\Lambda_s(W,\Phi)
=
\frac{I(s,W,\Phi)}{L(s,\pi,\mathrm{As})}
\]
are entire functions of $s$. Since $\Lambda_{s_0}\neq0$ for every
$s_0\in\C$ \cite[Theorem~6.1]{JY2026}, the poles of
$\mathcal J(\pi)$, with their maximal orders, coincide
with those of $L(s,\pi,\mathrm{As})$.

We have a natural filtration $\mathcal{S} = \mathcal{S}^0 \supset \mathcal{S}^1 \supset \cdots \supset \mathcal{S}^m \supset \cdots$, with $\mathcal{S}^m$ defined as the set of functions in $\mathcal{S}$ whose partial derivatives of total order less than $m$
vanish at $0$. Suppose that $s_0$ is a pole of
$\mathcal J(\pi)$, and let $d$ be its maximal order among the
integrals in this family. Then, for every $W$ and $\Phi$, we may write
\[
I(s,W,\Phi)
=
\frac{B_{s_0}(W,\Phi)}{(s-s_0)^d}
+\cdots ,
\]
where $B_{s_0}$ is a nonzero bilinear form on
$\mathcal W(\pi,\psi)\times\mathcal S$. We say that $s_0$ is an exceptional pole of level $m$ for $\mathcal{J}(\pi)$ if $B_{s_0}$ vanishes identically on $\mathcal W(\pi,\psi)\times\mathcal S^{m+1}$ but not on
$\mathcal W(\pi,\psi)\times\mathcal S^m$.

Let \(\operatorname{Sym}^m(\mathbb C^n)\) denote the \(m\)-th symmetric
power of the standard representation of
\(\operatorname{GL}_n(\mathbb C)\). We write
\(\widehat{\otimes}\) for the completed projective tensor product.
Our first main result characterizes exceptional poles. 

\begin{thm}\label{1.1}
Let $\pi$ be an irreducible generic representation of
$\operatorname{GL}_n(\C)$, and let $m\in\mathbb Z_{\geq0}$
and $s_0\in\C$. The following statements are equivalent:
\begin{enumerate}[label=$(\roman*)$]
    \item
    $s_0$ is an exceptional pole of level $m$ for
    $\mathcal J(\pi)$.
    \item
    $\operatorname{Hom}_{\operatorname{GL}_n(\R)}
    \left(
    \pi\widehat{\otimes}\operatorname{Sym}^m(\C^n),
    \lvert\det\rvert_\mathbb R^{-s_0}
    \right)\neq0$.
\end{enumerate}
\end{thm}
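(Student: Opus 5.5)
The plan is to study the leading Laurent coefficient $B_{s_0}$ through its equivariance and through a filtration of $\mathcal S$. First I record the equivariance. Changing variables $g\mapsto gh$ for $h\in\GL_n(\R)$ gives $I(s,\pi(h)W,\rho(h)\Phi)=|\det h|_\R^{-s}I(s,W,\Phi)$, where $\rho(h)\Phi(x)=\Phi(xh)$. Taking leading coefficients, $B_{s_0}$ is a $\GL_n(\R)$-invariant continuous bilinear form on $\mathcal W(\pi,\psi)\times\mathcal S$ twisted by $|\det|^{-s_0}_\R$. The filtration $\mathcal S^m$ is $\GL_n(\R)$-stable, and the quotient is $\mathcal S^m/\mathcal S^{m+1}\cong \Sym^m((\R^n)^*)\otimes\C$. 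The last map is the degree-$m$ Taylor jet at $0$. As a $\GL_n(\R)$-module, the restriction of $\Sym^m(\C^n)$ is (up to the dual/contragredient convention) this space of homogeneous polynomials. Fixing conventions so that $\jet_m$ is equivariant is routine.

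\textbf{(i)$\Rightarrow$(ii).} Suppose $B_{s_0}$ vanishes on $\mathcal W\times\mathcal S^{m+1}$ but not on $\mathcal W\times\mathcal S^m$. Then its restriction to $\mathcal W\times\mathcal S^m$ factors through $\mathcal W\widehat\otimes(\mathcal S^m/\mathcal S^{m+1})$. This gives a nonzero element of $\Hom_{\GL_n(\R)}(\pi\widehat\otimes\Sym^m(\C^n),|\det|_\R^{-s_0})$. Continuity should be automatic, since the quotient is finite dimensional and $\jet_m$ has a continuous section given by multiplying polynomials by a fixed bump function.

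\textbf{(ii)$\Rightarrow$(i).} This direction is the main obstacle. Given a nonzero functional $\ell$, I would first show that the existence of $\ell$ forces $s_0$ to be a pole of $\mathcal J(\pi)$. The strategy is to produce integrals whose singular part at $s_0$ is $\ell$ composed with $\jet_m$.

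The key computation is to restrict to $\Phi\in\mathcal S^m$. The Iwasawa decomposition $g=n\,a\,k$, together with the radial variable $t$ in $e_ng$, shows that for $\Phi\in\mathcal S^m$ the integral $I(s,W,\Phi)$ has an expansion. Its terms have the form $\int t^{ns+m+\cdots}\,(\text{integral over } P_n\text{-part})$. The poles come from the asymptotic expansion of $W$ along the mirabolic/derivative directions, i.e.\ from the theory of derivatives and the (Jacquet/Bernstein-type) asymptotics of $W(\mathrm{diag}(t,\dots,t)\,\cdot)$, which are established via \cite{BP2021}.

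The plan is then:
\begin{enumerate}[label=(\alph*)]
\item Prove that on $\mathcal W\times\mathcal S^{m}$, the principal part at $s_0$ of the family factors through $\jet_m$ modulo $\mathcal S^{m+1}$-contributions. These contributions have poles only at $s_0$ shifted by levels $>m$, i.e.\ from $\Sym^{m'}$ with $m'>m$.
\item Use a uniqueness statement. The space of invariant forms on $\mathcal W\times\mathcal S/\mathcal S^{m+1}$ twisted by $|\det|^{-s_0}$ is spanned by the leading coefficients of the zeta integrals. To get this, compare with the central character: $\Sym^m$ forces $\omega_\pi(t)t^{m}=|t|^{-ns_0}$ on $\R^\times$. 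This pins $s_0$ to lie among the exceptional poles predicted by the $L$-factor. Nonvanishing of $\Lambda_{s_0}$ \cite[Theorem~6.1]{JY2026} then guarantees that the maximal-order coefficient is nonzero on the relevant level.
\end{enumerate}

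Concretely, at step (b) I would first try to show that $\Hom_{\GL_n(\R)}(\pi\widehat\otimes\Sym^m,|\det|^{-s_0})\neq0$ implies $\Hom_{\GL_n(\R)}(\pi\widehat\otimes \mathcal S^{m+1},|\det|^{-s_0})$ contains no form extending it compatibly. This would be shown by a Bernstein–Zelevinsky-type analysis of $\mathcal S^{m+1}$ restricted to the mirabolic subgroup. It would force the meromorphic family $I(s,\cdot,\cdot)$, restricted to $\mathcal S^m$, to be singular at $s_0$ with residue proportional to $\ell\circ\jet_m$. Level exactly $m$ then follows by combining with (i)$\Rightarrow$(ii) applied at other levels.
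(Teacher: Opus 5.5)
\paragraph{The main gap: showing $s_0$ is a pole.}
Your argument for (i)$\Rightarrow$(ii) is fine and is the paper's. The gap is in (ii)$\Rightarrow$(i), at the step you flag first. You must show that a nonzero $\ell\in\Hom_{G_n(\R)}(\pi\widehat\otimes\Sym^m(\C^n),|\det|_\R^{-s_0})$ forces $s_0$ to be a pole of $L(s,\pi,\As)$. Without this, $B_{s_0}$ is not even defined, and $s_0$ cannot be an exceptional pole.

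The only concrete mechanism you offer is the central character. It yields the single constraint $ns_0=-2\sum_i s_i-m$ together with a parity condition, and this does not place $s_0$ at a pole of the $L$-factor. For example, take $n=2$, $m=0$, and $\pi=\chi_{2,s_1}\times\chi_{0,s_2}$ with $s_1-s_2\notin\Z$. The point $s_0=-(s_1+s_2)$ passes the central character test. But $L(s,\pi,\As)=\Gamma_\R(s+2s_1)\Gamma_\R(s+2s_2)\Gamma_\C(s+s_1+s_2+1)$ is holomorphic there. Your asymptotic-expansion sketch in (a) and the Bernstein--Zelevinsky remark at the end do not supply this step either.

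The paper proves it as follows:
\begin{itemize}
\item It filters $\Sym^m(\C^n)$ by a $B_n(\C)$-stable filtration. Then $\pi\widehat\otimes\Sym^m(\C^n)$ acquires graded pieces $\sigma_{\mathbf c}$, the principal series induced from $\chi_{m_i+c_i,\,s_i+c_i/2}$ with $\sum_i c_i=m$.
\item A nonzero $\ell$ descends to a nonzero invariant functional on some $\sigma_{\mathbf c}$.
\item Kemarsky's analysis of the $B_n(\C)$-orbits on $G_n(\C)/G_n(\R)$, which are indexed by involutions $w$, gives $s_0+a_i+b_{w(i)}=q_i-c_i$ with $q_1\le 0$.
\item After ordering $m_1\ge\cdots\ge m_n$, this puts $s_0$ at a pole of the Gamma factor attached to the index $1$.
\end{itemize}

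\paragraph{The uniqueness step.}
Your uniqueness step is stated in a form that presupposes the conclusion (``spanned by the leading coefficients''). It also does not follow from central character considerations. What is needed is the multiplicity-one bound $\dim\Hom_{G_n(\R)}(\pi\widehat\otimes\mathcal S,|\det|_\R^{-s})\le 1$ for every $s$, which the paper takes from Chen--Sun--Weng.

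With this bound and the pole statement, no analysis of principal parts is needed:
\begin{itemize}
\item $T_m(W,\Phi)=\ell(W\otimes\jet_m\Phi)$ is a nonzero invariant form that vanishes on $\mathcal S^{m+1}$ but not on $\mathcal S^m$.
\item The nonzero form $\Lambda_{s_0}$ must therefore equal $cT_m$ for some $c\neq0$.
\item If $d$ is the order of the pole, then $B_{s_0}=a_{s_0}\Lambda_{s_0}$ with $a_{s_0}=\lim_{s\to s_0}(s-s_0)^dL(s,\pi,\As)\neq0$.
\item Hence $B_{s_0}$ has level exactly $m$.
\end{itemize}
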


For $m=0$, Theorem~\ref{1.1} recovers the
$\operatorname{GL}_n(\R)$-distinction criterion of
\cite[Theorem~6.7]{JY2026}, applied to
$\pi\otimes|\det|_{\C}^{s_0/2}$.
    
The implication $(i)\Rightarrow(ii)$ follows from the
equivariance and filtration properties established at the end
of Section~\ref{s2}. The main difficulty is the converse. We first study a filtration of
$\pi\widehat{\otimes}\operatorname{Sym}^m(\mathbb C^n)$
whose successive quotients are induced from algebraic twists of the inducing characters of $\pi$. An orbit analysis of equivariant functionals on these quotients shows that the nonvanishing of the Hom space in $(ii)$ implies that $s_0$ is a pole of $L(s,\pi,\mathrm{As})$.

Composing a nonzero functional in $(ii)$ with the homogeneous degree-$m$ Taylor map in the Schwartz variable gives an equivariant bilinear form that vanishes on
\(\mathcal W(\pi,\psi)\times\mathcal S^{m+1}\) but not on
\(\mathcal W(\pi,\psi)\times\mathcal S^m\).
The uniqueness theorem of Chen,
Sun, and Weng
\cite[Theorem~1.4 and Remark~3.3]{CSW2026}
identifies this form with $\Lambda_{s_0}$ up to a nonzero
scalar. Since $B_{s_0}$ is also a nonzero scalar multiple
of $\Lambda_{s_0}$, the pole is exceptional of level $m$.

As an application of Theorem~\ref{1.1}, we express the Asai $L$-function of a representation in general position in terms of the exceptional Asai $L$-factors attached to the irreducible constituents of its derivatives. We refer the reader to Section~\ref{s5} for the precise definitions of these derivatives, the exceptional factors $L_{\mathrm{ex}}$, and the $\operatorname{l.c.m.}$ notation.

\begin{thm}\label{1.2}
Let $\pi$ be an irreducible generic representation of
$\operatorname{GL}_n(\mathbb C)$ in general position, as defined in
Section~\ref{s2}. For each \(k\) with \(0 \le k < n\), write
\[
\pi^{(k)} = \bigoplus_i \pi_{i}^{(k)}, 
\]
where the \(\pi_{i}^{(k)}\) denote the irreducible constituents of the $k$-th derivative of \(\pi\). Then
\[
L(s,\pi,\mathrm{As})^{-1}
=
\underset{\substack{0\leq k<n\\ i}}{\operatorname{l.c.m.}}
\left\{
L_{\mathrm{ex}}
\left(s,\pi_i^{(k)},\mathrm{As}\right)^{-1}
\right\}.
\]
\end{thm}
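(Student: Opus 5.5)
The plan is to reduce both sides to statements about poles attached to the derivatives, using the general-position hypothesis so that everything decomposes as a disjoint union of contributions. Since $\pi$ is in general position, it is a full induced representation $\pi=\operatorname{Ind}(\chi_1\otimes\cdots\otimes\chi_n)$ from characters $\chi_j$ of $\C^\times$. Then $L(s,\pi,\mathrm{As})=\prod_{j}L(s,\chi_j|_{\R^\times})\prod_{j<l}L(s,\chi_j\chi_l^c)$, up to the standard normalizations of the archimedean factors. The $k$-th derivative $\pi^{(k)}$, as defined in Section~\ref{s5}, should be the direct sum over subsets $S\subset\{1,\dots,n\}$ with $|S|=n-k$ of the induced representations $\pi_S=\operatorname{Ind}(\chi_j:j\in S)$, suitably twisted. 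General position guarantees that each $\pi_S$ is irreducible and generic. In particular the $\pi_i^{(k)}$ are exactly these $\pi_S$.

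The first main step is to compute $L_{\mathrm{ex}}(s,\pi_S,\mathrm{As})$ using Theorem~\ref{1.1}. By definition, $L_{\mathrm{ex}}$ collects the exceptional poles of all levels with their orders. By Theorem~\ref{1.1}, $s_0$ is such a pole if and only if $\operatorname{Hom}_{\GL_r(\R)}(\pi_S\widehat{\otimes}\Sym^m(\C^r),|\det|_\R^{-s_0})\ne0$ for some $m$, where $r=|S|$. I would then run the orbit analysis already used in the proof of Theorem~\ref{1.1}. The $\GL_r(\R)$-orbits on $B_r(\C)\backslash\GL_r(\C)$ are indexed by involutions, and the open orbit carries the most degenerate pairing. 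This should show that the Hom space is nonzero exactly when the poles come from the ``top'' piece: either $r=1$, where the character $\chi_j|_{\R^\times}$ contributes, or $r=2$, where a pair $\chi_j\chi_l^c$ contributes. The exceptional poles should be precisely the poles of the factor that genuinely involves all of $S$ and does not come from a proper subset. Concretely:
\[
L_{\mathrm{ex}}(s,\pi_{\{j\}},\mathrm{As})=L(s,\chi_j|_{\R^\times}),\qquad
L_{\mathrm{ex}}(s,\pi_{\{j,l\}},\mathrm{As})=L(s,\chi_j\chi_l^c),
\]
with the first factor possibly after removing poles shared with the smaller pieces, and $L_{\mathrm{ex}}=1$ for $|S|\ge3$.

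The second step is to take the l.c.m. of the reciprocals. The l.c.m. notation of Section~\ref{s5} means taking the maximal order of each pole. General position makes the factors $L(s,\chi_j|_{\R^\times})$ and $L(s,\chi_j\chi_l^c)$ have pairwise disjoint pole sets, or at least it makes the Asai factor equal to the l.c.m. of its constituent factors. Granting that, the l.c.m. over all $S$ reproduces the product formula for $L(s,\pi,\mathrm{As})^{-1}$.

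The main obstacle is the first step: proving that $L_{\mathrm{ex}}(s,\pi_S,\mathrm{As})$ is trivial for $|S|\ge3$, and is exactly the stated factor for $|S|\le2$. This requires showing that a nonzero $\GL_r(\R)$-invariant functional on $\pi_S\widehat{\otimes}\Sym^m$ at a pole $s_0$ forces $s_0$ to be a pole of a factor involving all indices of $S$. I would attack this through the filtration of $\pi_S\widehat\otimes\Sym^m(\C^r)$ from the proof of Theorem~\ref{1.1}, working quotient by quotient. Each quotient is induced from algebraic twists of the $\chi_j$, and I would use the Bruhat filtration on $\GL_r(\R)\backslash\GL_r(\C)/B$ together with general position to kill the contributions of the non-open orbits. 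The pole bookkeeping with $\Gamma$-factor multiplicities is where I expect the delicate point, namely that orders of poles add correctly in the l.c.m.
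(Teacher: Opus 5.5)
There is a genuine gap, and it is at the step you flagged. Your claim that $L_{\mathrm{ex}}(s,\pi_S,\mathrm{As})=1$ for $|S|\ge 3$ is false. So is the claim meant to compensate for it, namely that general position gives the individual $\Gamma$-factors of $L(s,\pi,\mathrm{As})$ disjoint pole sets.

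By Theorem~\ref{classification-general}, $s_0$ is an exceptional pole of $\pi_S$ if and only if there are an involution $w$ of $S$ and integers $c_i\ge 0$ satisfying \eqref{eq:asai-classification} and \eqref{eq:asai-classification-fixed}. General position forces $w$ to have at most one fixed point. The factors attached to all $w$-orbits then have a pole at $s_0$ simultaneously, so the exceptional pole has order $\lceil |S|/2\rceil$ (Corollary~\ref{cor:exceptional-pole-order}). A pole ``involving all of $S$'' is therefore a simultaneous pole of several factors whose index sets partition $S$, not a single factor, and nothing confines it to $|S|\le 2$.

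For a concrete failure, take $n=3$, all $m_i=0$ and $(s_1,s_2,s_3)=(0.15,\,0.1,\,0.2)$, which is in general position. At $s_0=-0.3$ both $\Gamma_{\mathbb R}(s+2s_1)$ and $\Gamma_{\mathbb C}(s+s_2+s_3)$ have poles, so $L(s,\pi,\mathrm{As})$ has a double pole there. Correspondingly, $s_0$ is an exceptional pole of level $0$ of $\pi$ itself, with $w=(2\,3)$ and $\mathbf c=0$. Under your formulas only $\pi_{\{1\}}$ and $\pi_{\{2,3\}}$ see $s_0$, each with order $1$. The l.c.m.\ takes the maximum of the orders, not their sum, so your right-hand side would vanish only to order $1$ at $s_0$. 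The two errors do not cancel, and the identity you would derive is false.

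The missing idea is the combinatorial fact behind Theorem~\ref{order-characterization}: general position separates the \emph{index sets} of the factors with a pole at $s_0$, not their pole sets. Modulo $\mathbb Z$, every pole condition at $s_0$ reads $s_0+a_i+a_j\equiv 0$, with $i=j$ for a diagonal factor. Hence the factors with a pole at $s_0$ have pairwise disjoint index sets, and at most one of them is diagonal. Let $I$ be the union of these index sets and $d$ the pole order. Then:
\begin{enumerate}
\item[(a)] $\pi_I\subseteq\pi^{(n-|I|)}$;
\item[(b)] reading off the $c_i$ from the pole conditions, as in \eqref{eq:asai-diagonal-c} and \eqref{eq:asai-offdiagonal-c}, shows that $s_0$ is an exceptional pole of $\pi_I$ of order exactly $d$;
\item[(c)] conversely, $L(s,\pi,\mathrm{As})=L(s,\pi_I,\mathrm{As})R(s)$ with $R$ a zero-free Gamma product, so every constituent's exceptional order is bounded by the pole order of $L(s,\pi,\mathrm{As})$.
\end{enumerate}

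Two further points in your plan also need repair. First, an orbit analysis on the graded pieces only gives necessary conditions for the Hom space in Theorem~\ref{1.1} to be nonzero. To produce exceptional poles you need an existence statement; the paper gets it from the distinction criterion for principal series combined with the infinitesimal-character splitting in Proposition~\ref{principal-cong-inverse}. Second, you must check, as in Proposition~\ref{explicit-derivative}, that the twists $|\det|_{\mathbb C}^{-q/2}$ and $\delta_Q^{1/2}$ cancel, so that the constituents are exactly the $\pi_I$. Any leftover twist would shift every pole.
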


The key step in the proof is to show that every pole of $L(s,\pi,\mathrm{As})$ occurs, with the same order, as an
exceptional pole of an irreducible constituent of a
derivative of $\pi$; see
Theorem~\ref{order-characterization}.
The Rankin--Selberg analogue of Theorem~\ref{1.2} is
proved in \cite[Theorem~1.2]{JNY2026}.

The paper is organized as follows. In Section~\ref{s2}, we fix
notation and recall the necessary definitions. In Section~\ref{s3},
we study the Hom space in Theorem~\ref{1.1}~$(ii)$ and show that its
nonvanishing implies that $s_0$ is a pole of the Asai \(L\)-function.
In Section~\ref{s4}, we prove Theorem~\ref{1.1}. In Section~\ref{s5}, we define the exceptional $L$-factors, compute the
derivatives of $\pi$ explicitly, and prove Theorem~\ref{1.2}.

\begin{section}{Notation and Preliminaries}\label{s2}

Let \(\mathbb{R}\) and \(\mathbb{C}\) denote the fields of real and complex numbers, equipped with their normalized absolute values \(\lvert\>\cdot\>\rvert_{\mathbb{R}}\) and \(\lvert\>\cdot\>\rvert_{\mathbb{C}}\), respectively. For \(n \geq 1\), let \(G_n\) denote the algebraic group \(\GL_n\), and let \(B_n\), \(A_n\), and \(N_n\) denote its subgroups of upper triangular, diagonal, and unipotent upper triangular matrices, respectively. We write \(e_n = (0, \dots, 0, 1) \in \mathbb{R}^n\), viewed as a row vector. Let \(\mathbb{S}^1\) denote the multiplicative group of complex numbers whose absolute value is \(1\).

By a representation of \(G_n(\C)\), we mean a Casselman--Wallach representation, that is, a smooth admissible Fr\'echet representation of moderate growth and of finite length. We write \(\Irr(G_n(\C))\) for the set of isomorphism classes of irreducible representations of \(G_n(\C)\). By a slight abuse of notation, we identify a representation $\pi \in \operatorname{Irr}(G_n(\mathbb{C}))$ with its underlying vector space. We use \(\widehat{\otimes}\) to denote the completed projective tensor
product. All \(\operatorname{Hom}\) spaces of equivariant linear maps are assumed to consist of continuous linear maps.

Let $P=P_{n_1,\ldots,n_k}$ be the standard parabolic subgroup of $G_n$
associated to the ordered partition $n=n_1+\cdots+n_k$, and write $P=MU$ for its Levi
decomposition, where
\[
M = G_{n_1}\times \cdots \times G_{n_k}.
\]
We write $\mathfrak g_n = \operatorname{Lie}(G_n(\C))\otimes_{\R}\C$
for the complexified Lie algebra of $G_n(\C)$.

For each $1\le i\le k$, let $\tau_i\in \Irr(G_{n_i}(\C))$, and set
$\sigma=\tau_1\boxtimes\cdots\boxtimes\tau_k$, viewed as a representation of
$M(\C)$. We denote by
\[
\tau_1 \times \cdots \times \tau_k
\]
the normalized parabolic induction
$\operatorname{Ind}_{P(\C)}^{G_n(\C)}(\sigma)$.

Fix a nontrivial additive character
\(\psi:\mathbb C\to\mathbb S^1\) which is trivial on \(\mathbb R\).
For each \(n\geq 1\), we define the generic character \(\psi_n\) of
\(N_n(\mathbb C)\) by
\[
\psi_n(u)
=
\psi\!\left((-1)^n\sum_{i=1}^{n-1}u_{i,i+1}\right),
\qquad u\in N_n(\mathbb C).
\]
In particular, \(\psi_n\) is trivial on \(N_n(\mathbb R)\).

A representation \(\pi\in\Irr(G_n(\mathbb C))\) is \emph{generic} if
there is a nonzero continuous linear functional
\(\lambda:\pi\to\mathbb C\) such that
\[
\lambda(\pi(u)v)=\psi_n(u)\lambda(v)
\qquad
(u\in N_n(\mathbb C),\ v\in\pi).
\]
We write $\Irr_{\mathrm{gen}}(G_n(\mathbb C))$ for the subset
of generic representations. For
$\pi\in\Irr_{\mathrm{gen}}(G_n(\mathbb C))$, let
$\mathcal W(\pi,\psi)$ denote its Whittaker model with respect
to $\psi_n$. We endow $\mathcal{W}(\pi,\psi)$ with the Fréchet topology induced by $\pi$.

By \cite{SV1980} (see also \cite[Appendix~A]{Kem2015}), every representation
\(\pi\in\Irr_{\mathrm{gen}}(G_n(\C))\) is isomorphic to an irreducible
principal series representation of the form
\[
\pi=\chi_1\times\cdots\times\chi_n,
\]
where each \(\chi_i\) is a character of \(\C^\times\). For \(z\in\mathbb C^\times\), set
\[
[z]=\frac{z}{|z|_{\mathbb C}^{1/2}}.
\]
For \(m\in\mathbb Z\) and \(s\in\mathbb C\), define the character
\(\chi_{m,s}\) of \(\mathbb C^\times\) by
\[
\chi_{m,s}(z)=[z]^m|z|_{\mathbb C}^{s}.
\]
Every smooth character of \(\mathbb C^\times\) is uniquely of this
form. 

We shall mainly consider principal series representations of the form
\[
\pi = \chi_{m_1,s_1} \times \dots \times \chi_{m_n,s_n},
\qquad
m_i \in \mathbb{Z}, \quad s_i \in \mathbb{C}.
\]
For each \(1 \leq i \leq n\), we set
\[
a_i = s_i + \frac{m_i}{2},
\qquad
b_i = s_i - \frac{m_i}{2}.
\]
Following \cite{CC1999}, we say that \(\pi\) is in \emph{general position} if
\[
\begin{aligned}
a_i, \, b_i &\notin \mathbb{Z} && (1 \leq i \leq n), \\
a_i - a_j, \, b_i - b_j &\notin \mathbb{Z} && (1 \leq i \neq j \leq n).
\end{aligned}
\]
Under these hypotheses, the representation \(\pi\) is irreducible and generic.

Set
\[
\Gamma_{\mathbb R}(s)
=
\pi^{-s/2}\Gamma\left(\frac{s}{2}\right)
\qquad\text{and}\qquad
\Gamma_{\mathbb C}(s)
=
2(2\pi)^{-s}\Gamma(s),
\]
where \(\Gamma(s)\) denotes the usual Gamma function.

Let $W_{\mathbb C}=\mathbb C^\times
$ be the Weil group of \(\mathbb C\), and let
\[
W_{\mathbb R}
=
\mathbb C^\times\sqcup j\mathbb C^\times
\]
be the Weil group of \(\mathbb R\), where
$j^2=-1$ and $jwj^{-1}=\overline{w}$ for every $w\in\mathbb C^\times$.
An admissible representation of \(W_{\mathbb C}\) or
\(W_{\mathbb R}\) is a continuous semisimple representation on a
finite-dimensional complex vector space. For such a representation
\(\sigma\), we denote by \(L(s,\sigma)\) the local \(L\)-factor
associated with \(\sigma\) as in~\cite{Tate1979}.

Let  $\phi:W_{\mathbb C}\longrightarrow\operatorname{GL}(V)
$ be an admissible representation. Its \emph{Asai representation}
\[
\operatorname{As}(\phi):
W_{\mathbb R}\longrightarrow\operatorname{GL}(V\otimes V)
\]
is defined on pure tensors by
\[
\begin{aligned}
\As(\phi)(z)(u\otimes v)
&=\phi(z)u\otimes\phi(jzj^{-1})v,
&& z\in W_{\mathbb{C}},\\
\As(\phi)(j)(u\otimes v)
&=v\otimes\phi(j^2)u,
\end{aligned}
\qquad u,v\in V.
\]
Let \(\pi\in\Irr(G_n(\mathbb C))\), and let
\[
\phi_\pi:
W_{\mathbb C}\longrightarrow\operatorname{GL}_n(\mathbb C)
\]
be the admissible representation of $W_{\mathbb C}$ associated with \(\pi\) by the local
Langlands correspondence \cite{Kna1994}. We define
\[
L(s,\pi,\operatorname{As})
=
L\bigl(s,\operatorname{As}(\phi_\pi)\bigr),
\]
and call it the \emph{Asai \(L\)-function} of \(\pi\).

We now record the equivariance properties underlying
Theorem~\ref{1.1}. For analogous constructions in the
Rankin--Selberg setting, see Chai~\cite{Chai2015} and
\cite{JNY2026}.

Fix $\pi\in\Irr_{\mathrm{gen}}(G_n(\mathbb C))$, and recall
the filtration $\{\mathcal S^m\}_{m\geq0}$ of
the Schwartz space $\mathcal S$ of $\R^n$ introduced in Section~\ref{s1}.
The action of $G_n(\mathbb R)$ on $\mathcal S$ given by
\begin{equation}\label{eq:schwartz-row-action-asai}
(g\cdot\Phi)(x)=\Phi(xg),
\qquad
g\in G_n(\mathbb R),\quad x\in\mathbb R^n,
\end{equation}
preserves this filtration. For every $m\geq0$, taking the
homogeneous degree-$m$ Taylor term at the origin induces
a $G_n(\mathbb R)$-equivariant isomorphism
\begin{equation}\label{eq:schwartz-graded-asai}
\mathcal S^m/\mathcal S^{m+1}
\cong
\operatorname{Sym}^m(\mathbb C^n).
\end{equation}
Here, $\operatorname{Sym}^m(\mathbb C^n)$ is realized as the
space of homogeneous polynomials of degree $m$ in a row
variable, with action
\[
(g\cdot P)(x)=P(xg).
\]

We let $G_n(\mathbb R)$ act on $\mathcal W(\pi,\psi)$
by right translation:
\[
(g\cdot W)(h)=W(hg),
\qquad
g\in G_n(\mathbb R),\quad h\in G_n(\mathbb C).
\]
A change of variables in the Flicker integral gives
\[
I(s,g\cdot W,g\cdot\Phi)
=
\lvert\det g\rvert_\mathbb R^{-s}I(s,W,\Phi),
\qquad g\in G_n(\mathbb R).
\]
Consequently, at any pole $s_0$ of the family of Flicker integrals, comparison of leading Laurent coefficients gives
\begin{equation}\label{eq:asai-leading-coefficient-equivariance}
B_{s_0}(g\cdot W,g\cdot\Phi)
=
\lvert\det g\rvert_\mathbb R^{-s_0}B_{s_0}(W,\Phi),
\qquad g\in G_n(\mathbb R).
\end{equation}
Moreover, $B_{s_0}$ is continuous
\cite[Theorem~6.1 and Remark~6.2]{JY2026}.

Suppose now that $s_0$ is an exceptional pole of level $m$
for $\pi$. By definition, the restriction of $B_{s_0}$ to
$\mathcal W(\pi,\psi)\times\mathcal S^m$ descends to a
nonzero continuous bilinear form on
\[
\mathcal W(\pi,\psi)\times
\bigl(\mathcal S^m/\mathcal S^{m+1}\bigr).
\]
By \eqref{eq:schwartz-graded-asai} and
\eqref{eq:asai-leading-coefficient-equivariance}, this form
determines a nonzero element of
\[
\operatorname{Hom}_{G_n(\mathbb R)}
\left(
\pi\widehat{\otimes}\operatorname{Sym}^m(\mathbb C^n),
\lvert\det\rvert_\mathbb R^{-s_0}
\right).
\]
This proves the implication from $(i)$ to $(ii)$ in
Theorem~\ref{1.1}. The converse will be proved in
Section~\ref{s4}.

\end{section}

\begin{section}{Equivariant Functionals and Poles of the Asai L-Function}\label{s3}

We first recall the primary decomposition with respect to
infinitesimal characters. The proof of
\cite[Lemma~3.1]{JNY2026} applies equally to $G_n(\C)$.
Write $Z(\mathfrak g_n)$ for the center of the universal
enveloping algebra of $\mathfrak g_n$.

\begin{lemma}\label{lem:primary-decomposition}
Let \(X\) be a representation of
\(G_n(\C)\). For an infinitesimal character \(\zeta\), let
\(\mathfrak m_\zeta\) denote the corresponding maximal ideal of
\(Z(\mathfrak g_n)\), and set
\[
X[\zeta]
=
\left\{
x\in X:
\mathfrak m_\zeta^N\cdot x=0
\text{ for some }N\geq1
\right\}.
\]
Then:
\begin{enumerate}
\item $X$ decomposes as a finite topological direct sum
\[
X = \bigoplus_{\zeta} X[\zeta].
\]
\item Each projection $X \to X[\zeta]$ is continuous and $G_n(\C)$-equivariant.
\item The functor
\[
X \longmapsto X[\zeta]
\]
is exact on the category of Casselman--Wallach representations.
\end{enumerate}
\end{lemma}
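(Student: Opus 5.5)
The plan is to reduce everything to the finite-dimensionality of the action of $Z(\mathfrak g_n)$ on a suitable finite-dimensional subspace, and then to obtain the decomposition from the algebraic Chinese remainder theorem. The topological statements will follow from realizing the projections as elements of $Z(\mathfrak g_n)$ acting on $X$.

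\emph{Step 1: a polynomial annihilating $X$.} Since $X$ is Casselman--Wallach, it has finite length. Each irreducible subquotient has an infinitesimal character, so $Z(\mathfrak g_n)$ acts on it through a character $\zeta_j$. Let $0=X_0\subset X_1\subset\cdots\subset X_r=X$ be a composition series with $X_j/X_{j-1}$ of infinitesimal character $\zeta_j$. Then the ideal $I=\mathfrak m_{\zeta_1}\mathfrak m_{\zeta_2}\cdots\mathfrak m_{\zeta_r}$ annihilates $X$, because each factor $\mathfrak m_{\zeta_j}$ pushes $X_j$ into $X_{j-1}$. Group the distinct characters $\zeta$ among the $\zeta_j$, and let $N_\zeta$ be the multiplicity with which $\zeta$ occurs. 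The ideals $\mathfrak m_\zeta^{N_\zeta}$ for distinct $\zeta$ are pairwise comaximal, since distinct maximal ideals are comaximal. By the Chinese remainder theorem,
\[
Z(\mathfrak g_n)/I\cong\prod_\zeta Z(\mathfrak g_n)/\mathfrak m_\zeta^{N_\zeta}.
\]
This yields orthogonal idempotents $e_\zeta\in Z(\mathfrak g_n)/I$ with $\sum_\zeta e_\zeta=1$.

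\emph{Step 2: the decomposition and the projections.} Lift each $e_\zeta$ to an element $z_\zeta\in Z(\mathfrak g_n)$. Its action on $X$ is well defined modulo $I$, hence is an idempotent operator, and these operators sum to the identity. Each $z_\zeta$ acts through the Lie algebra action on smooth vectors. That action is continuous on the Fr\'echet space $X$ and commutes with $G_n(\C)$, because $Z(\mathfrak g_n)$ is $\mathrm{Ad}$-invariant and $G_n(\C)$ is connected. So each projection $p_\zeta=z_\zeta$ is continuous and $G_n(\C)$-equivariant, which gives (2).

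Next we identify the image of $p_\zeta$ with $X[\zeta]$. The image of $p_\zeta$ is killed by $\mathfrak m_\zeta^{N_\zeta}$, since $e_\zeta$ times that ideal is zero modulo $I$. Conversely, suppose $\mathfrak m_\zeta^N x=0$. For $\zeta'\neq\zeta$, the element $e_{\zeta'}$ lies in the ideal $\mathfrak m_\zeta^{N}+I$, because its projection to the $\zeta$-factor vanishes and a power of $\mathfrak m_\zeta$ can be absorbed. Hence $p_{\zeta'}x=0$ and $x=p_\zeta x$. Thus $X=\bigoplus_\zeta X[\zeta]$ as a topological direct sum, with closed summands $X[\zeta]=\ker(1-p_\zeta)$, and only finitely many of them are nonzero. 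This proves (1).

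\emph{Step 3: exactness.} Let $0\to X'\to X\to X''\to 0$ be exact. The maps are $\mathfrak g_n$-equivariant, so they intertwine the actions of $Z(\mathfrak g_n)$. We choose $I$ to annihilate all three modules at once, for instance the product of the ideals for $X'$ and $X''$, and use common idempotents $z_\zeta$. The functor $X\mapsto X[\zeta]$ is then $X\mapsto z_\zeta X$, the image of a natural idempotent endomorphism. Applying a natural idempotent to an exact sequence preserves exactness, since an exact sequence is a direct sum of its images under $z_\zeta$ and $1-z_\zeta$.

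\emph{Main obstacle.} The only delicate point is the existence of a composition series with well-defined infinitesimal characters in the Casselman--Wallach category. This follows from finite length together with Schur's lemma for irreducible Casselman--Wallach representations, and it is exactly what the cited proof of \cite[Lemma~3.1]{JNY2026} uses.
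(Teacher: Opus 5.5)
Your proof is correct. The Chinese-remainder idempotents of $Z(\mathfrak g_n)/I$, realized by central elements, act as continuous projections that are $G_n(\C)$-equivariant because $G_n(\C)$ is connected, and choosing a common annihilating ideal makes them natural in morphisms, which gives exactness. The paper gives no argument of its own and only says that the proof of \cite[Lemma~3.1]{JNY2026} carries over to $G_n(\C)$; I cannot see that proof, but your argument is the standard way to prove this kind of primary decomposition.
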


Next, we study algebraic twists of irreducible generic representations. Let
\[
\pi=
\chi_{m_1,s_1}\times\cdots\times\chi_{m_n,s_n}
\]
be an irreducible generic representation of $G_n(\C)$, where
$m_i\in\mathbb Z$ and $s_i\in\mathbb C$. Fix $m\in\mathbb Z_{\geq0}$, and put
$\rho=\Sym^m(\mathbb C^n)$. Let
\[
B_n=A_nN_n
\]
be the standard Borel subgroup of $G_n$. Write
\[
\sigma=
\chi_{m_1,s_1}\boxtimes\cdots\boxtimes\chi_{m_n,s_n},
\qquad
\pi=\Ind_{B_n(\C)}^{G_n(\C)}(\sigma).
\]

Let $V=\mathbb C^n$ and consider the flag
\[
0=V_0\subset V_1\subset\cdots\subset V_n=V
\]
stabilized by $B_n(\C)$, where
\[
\dim(V_j/V_{j-1})=1,
\qquad 1\leq j\leq n.
\]
Put
\[
W_j=V_j/V_{j-1},
\qquad 1\leq j\leq n.
\]

We now construct a finite $B_n(\C)$-stable filtration of
$\rho=\Sym^m(V)$. Set
\[
r_j=(m+2)^{j-1},
\qquad 1\leq j\leq n,
\]
and equip $V$ with the increasing filtration for which the graded
piece $W_j$ has degree $r_j$. Equip $\Sym^m(V)$ with the induced
multiplicative filtration. Thus, $\mathcal F_d\rho$ is spanned by
monomials $v_1\cdots v_m$ for which there exist indices
$j_1,\ldots,j_m$ such that
\[
v_q\in V_{j_q},
\qquad
r_{j_1}+\cdots+r_{j_m}\leq d.
\]
Each $\mathcal F_d\rho$ is $B_n(\C)$-stable.

Let
\[
\mathcal C_m=
\left\{
\mathbf c=(c_1,\ldots,c_n)\in
\mathbb Z_{\geq0}^{n}
:
c_1+\cdots+c_n=m
\right\},
\]
and define
\[
\operatorname{wt}(\mathbf c)=\sum_{j=1}^{n}c_jr_j.
\]
Since $0\leq c_j\leq m$, the map
$\mathbf c\mapsto\operatorname{wt}(\mathbf c)$ is injective. Order the elements of
$\mathcal C_m$ as
\[
\mathbf c^{(1)},\ldots,\mathbf c^{(N)},
\qquad
\operatorname{wt}(\mathbf c^{(1)})<\cdots<
\operatorname{wt}(\mathbf c^{(N)}),
\]
where
\[
N=\binom{m+n-1}{m},
\]
and define
\[
F^0\rho=0,
\qquad
F^i\rho=\mathcal F_{\operatorname{wt}(\mathbf c^{(i)})}\rho,
\qquad 1\leq i\leq N.
\]
Then
\[
0=F^0\rho\subset F^1\rho\subset\cdots\subset F^N\rho=\rho
\]
is a $B_n(\C)$-stable filtration, and its successive quotients are
\[
F^i\rho/F^{i-1}\rho
\cong
\bigotimes_{u=1}^{n}
\Sym^{c_u^{(i)}}(W_u)
\]
as $B_n(\C)$-representations, with $N_n(\C)$ acting trivially on both sides.

The tensor identity gives
\[
\pi\widehat{\otimes}\rho
\cong
\Ind_{B_n(\C)}^{G_n(\C)}
\bigl(\sigma\otimes\rho|_{B_n(\C)}\bigr).
\]
Here $\Ind$ denotes normalized smooth induction of
finite-dimensional $B_n(\C)$-modules, whose $N_n(\C)$-action need not be trivial.
Tensoring the filtration of $\rho$ with $\sigma$ and applying this functor yields a filtration by closed $G_n(\C)$-stable
subspaces
\[
0=M_0\subseteq M_1\subseteq\cdots\subseteq M_N
=\pi\widehat{\otimes}\rho,
\]
where
\[
M_i=
\Ind_{B_n(\C)}^{G_n(\C)}
\bigl(\sigma\otimes F^i\rho\bigr).
\]
The graded pieces are
\[
\begin{aligned}
M_i/M_{i-1}
&\cong
\Ind_{B_n(\C)}^{G_n(\C)}
\left(
\sigma\otimes
\left(F^i\rho/F^{i-1}\rho\right)
\right)\\
&\cong
\chi_{m_1+c_1^{(i)},\,s_1+\frac{c_1^{(i)}}2}
\times\cdots\times
\chi_{m_n+c_n^{(i)},\,s_n+\frac{c_n^{(i)}}2}.
\end{aligned}
\]
Here we have used
\[
\chi_{m_u,s_u}\otimes\Sym^{c_u^{(i)}}(\mathbb C)
\cong
\chi_{m_u+c_u^{(i)},\,s_u+\frac{c_u^{(i)}}2}.
\]
For $\mathbf c=(c_1,\ldots,c_n)\in\mathcal C_m$, put
\begin{equation}\label{eq:sigma-c}
\sigma_{\mathbf c}
=
\chi_{m_1+c_1,\,s_1+\frac{c_1}{2}}
\times\cdots\times
\chi_{m_n+c_n,\,s_n+\frac{c_n}{2}}.
\end{equation}

Now let $s_0\in\mathbb C$ and assume that
\[
\operatorname{Hom}_{G_n(\R)}
\left(
\pi\widehat{\otimes}\rho,
\lvert\det\rvert_\mathbb R^{-s_0}
\right)
\neq0.
\]

\begin{lemma}\label{lem:graded-piece-map}
There exists a multi-index
$\mathbf c=(c_1,\ldots,c_n)\in\mathcal C_m
$ such that
\[
\operatorname{Hom}_{G_n(\R)}
\left(
\sigma_{\mathbf c},
\lvert\det\rvert_\mathbb R^{-s_0}
\right)
\neq0.
\]
\end{lemma}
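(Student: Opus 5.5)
Given a nonzero $\lambda\in\operatorname{Hom}_{G_n(\R)}(\pi\widehat{\otimes}\rho,\lvert\det\rvert_\mathbb R^{-s_0})$, we descend along the filtration $0=M_0\subseteq M_1\subseteq\cdots\subseteq M_N=\pi\widehat{\otimes}\rho$. Let $i$ be the smallest index with $\lambda|_{M_i}\neq0$; such an $i\ge1$ exists since $M_N=\pi\widehat{\otimes}\rho$. By minimality, $\lambda$ vanishes on $M_{i-1}$, so $\lambda|_{M_i}$ factors through a nonzero linear functional $\bar\lambda$ on $M_i/M_{i-1}$. We then take $\mathbf c=\mathbf c^{(i)}$ and use the isomorphism $M_i/M_{i-1}\cong\sigma_{\mathbf c}$ established above.

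It remains to check that $\bar\lambda$ is continuous and $G_n(\R)$-equivariant. The subspaces $M_{i-1}\subseteq M_i$ are closed and $G_n(\C)$-stable, hence $G_n(\R)$-stable. The quotient $M_i/M_{i-1}$ carries the quotient Fréchet topology, and a functional on it is continuous precisely when its pullback to $M_i$ is. The pullback is $\lambda|_{M_i}$, which is continuous, so $\bar\lambda$ is continuous. Equivariance with respect to $\lvert\det\rvert_\mathbb R^{-s_0}$ passes to the quotient because the projection $M_i\to M_i/M_{i-1}$ is $G_n(\R)$-equivariant.

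The one point requiring care is that the identification $M_i/M_{i-1}\cong\Ind_{B_n(\C)}^{G_n(\C)}(\sigma\otimes(F^i\rho/F^{i-1}\rho))$ is a topological isomorphism of $G_n(\C)$-representations, not merely an algebraic one. This follows because normalized smooth induction from $B_n(\C)$ is exact on finite-dimensional $B_n(\C)$-modules, since $G_n(\C)/B_n(\C)$ is compact. Concretely, $M_i\to\Ind(\sigma\otimes F^i\rho/F^{i-1}\rho)$ is a continuous surjection of Fréchet spaces, as one sees by lifting sections using a vector-space splitting. Its kernel is $M_{i-1}$. The open mapping theorem then shows the induced map from the quotient is a topological isomorphism. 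Transporting $\bar\lambda$ through this isomorphism gives a nonzero element of $\operatorname{Hom}_{G_n(\R)}(\sigma_{\mathbf c},\lvert\det\rvert_\mathbb R^{-s_0})$.
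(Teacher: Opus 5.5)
Your proof is correct and follows the paper's argument exactly: take the minimal $i$ with $\lambda|_{M_i}\neq 0$ and descend $\lambda$ to a nonzero functional on $M_i/M_{i-1}\cong\sigma_{\mathbf c^{(i)}}$. Your extra check that this identification is topological (exactness of induction plus the open mapping theorem) fills in a point the paper takes for granted, but it does not change the route.
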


\begin{proof}
By assumption, there exists
\[
0\neq\ell\in
\operatorname{Hom}_{G_n(\R)}
\left(
M_N,
\lvert\det\rvert_\mathbb R^{-s_0}
\right).
\]
Choose the smallest index $i\geq1$ such that
$\ell|_{M_i}\neq0$. Then
$M_{i-1}\subseteq\ker(\ell),$
so $\ell|_{M_i}$ descends to a nonzero continuous
$G_n(\R)$-equivariant functional on
\[
M_i/M_{i-1}\cong\sigma_{\mathbf c^{(i)}}.\qedhere
\]
\end{proof}

For an involution $w\in S_n$, put
\[
I_w=
\left\{
(i,j):i>j,\ w(i)>w(j)
\right\}.
\]
For a function $\kappa:I_w\longrightarrow\mathbb Z_{\geq0}$, define
\[
q_i(\kappa)
=
\sum_{\substack{j:(i,j)\in I_w}}\kappa(i,j)
-
\sum_{\substack{j:(j,i)\in I_w}}\kappa(j,i),
\qquad 1\leq i\leq n.
\]

\begin{lemma}\label{lem:asai-orbit-data}
Let $\mathbf c=(c_1,\ldots,c_n)\in\mathcal C_m$, and suppose that
\[
\operatorname{Hom}_{G_n(\R)}
\left(
\sigma_{\mathbf c},
\lvert\det\rvert_\mathbb R^{-s_0}
\right)
\neq0.
\]
Then there exist an involution $w\in S_n$ and a function
$\kappa:I_w\longrightarrow\mathbb Z_{\geq0}$ such that, upon writing
$q_i=q_i(\kappa)$, we have
\begin{equation}\label{eq:orbit-relation}
s_0+a_i+b_{w(i)}=q_i-c_i,
\qquad 1\leq i\leq n.
\end{equation}
Moreover,
\begin{equation}\label{eq:q-properties}
\sum_{i=1}^{n}q_i=0,
\qquad
q_1\leq0,
\end{equation}
and, for every fixed point $i$ of $w$,
\begin{equation}\label{eq:fixed-parity-q}
q_i\equiv m_i+c_i\pmod2.
\end{equation}
\end{lemma}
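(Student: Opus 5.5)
We will prove the lemma by a Bruhat-orbit (Mackey-theory) analysis of $G_n(\R)$-invariant functionals on the principal series $\sigma_{\mathbf c}=\Ind_{B_n(\C)}^{G_n(\C)}(\sigma'_{\mathbf c})$, where $\sigma'_{\mathbf c}$ is the character $\boxtimes_u\chi_{m_u+c_u,\,s_u+c_u/2}$ of $A_n(\C)$. The first step is to recall that the $G_n(\R)$-orbits on the flag variety $B_n(\C)\backslash G_n(\C)$ are finite in number and parametrized by involutions $w\in S_n$. The orbit attached to $w$ is $B_n(\C)\eta_w G_n(\R)$, and its stabilizer is $H_w=\eta_w^{-1}B_n(\C)\eta_w\cap G_n(\R)$. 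Its Levi part is a product of copies of $\C^\times$ (one for each $2$-cycle $\{i,w(i)\}$, embedded as $z\mapsto \mathrm{diag}(z,\bar z)$ in the relevant coordinates) and of $\R^\times$ (one for each fixed point of $w$). We order the orbits by a closure order and filter $\sigma_{\mathbf c}$ by Schwartz sections supported on open unions of orbits. This is the filtration of Aizenbud--Gourevitch / Borel's lemma type. A nonzero functional $\ell\in\operatorname{Hom}_{G_n(\R)}(\sigma_{\mathbf c},|\det|_\R^{-s_0})$ then yields an orbit $\mathcal O_w$ and a transversal order $k\ge0$ such that
\[
\operatorname{Hom}_{H_w}\Bigl(\sigma'_{\mathbf c}\delta_{B}^{1/2}\otimes\Sym^k(N^\vee_{\mathcal O_w})\otimes \delta_w,\ |\det|_\R^{-s_0}\Bigr)\neq0 ,
\]
where $N^\vee_{\mathcal O_w}$ is the conormal space at $\eta_w$ and $\delta_w$ is the modular correction from restricting to the orbit.

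The second step is to compute the $H_w$-weights of the conormal space. The tangent space to the flag variety at $\eta_w$ is $\mathfrak n_-$, with root spaces $E_{ij}$ for $i>j$. The orbit's tangent directions are those coming from $\mathfrak g_n(\R)$. For pairs $(i,j)$ with $i>j$ and $w(i)>w(j)$, i.e.\ $(i,j)\in I_w$, exactly one complex conormal direction survives. Its torus weight is $z\mapsto[\cdot]$-type characters of the form $\chi_i\chi_j^{-1}$ composed with the $w$-twisted conjugation. Choosing a weight vector in $\Sym^k$ amounts to choosing multiplicities $\kappa(i,j)\ge0$ with $\sum\kappa=k$. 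The resulting character of the $i$-th torus coordinate is then $|\cdot|^{q_i(\kappa)}$ times a unitary sign part, which explains the definition of $q_i(\kappa)$.

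The third step compares the characters on the torus of $H_w$. For a pair $\{i,w(i)\}$, the restriction of $\chi_{m_i+c_i,s_i+c_i/2}$ and $\chi_{m_{w(i)}+c_{w(i)},\dots}$ to $z\mapsto(z,\bar z)$ gives $z^{a_i+c_i}\bar z^{b_i}$ times $\bar z^{a_{w(i)}+c_{w(i)}}z^{b_{w(i)}}$, using $a=s+m/2$ and $b=s-m/2$. The modulus characters $\delta_B^{1/2}\delta_w$ cancel against each other. Equating this with $|z|^{-2s_0}$ times the conormal weight $z^{q_i-\cdot}\dots$ separately in $z$ and $\bar z$ gives \eqref{eq:orbit-relation} at $i$ and at $w(i)$. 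For a fixed point $i$, restricting to $\R^\times$ gives $|t|^{2s_0+\dots}\mathrm{sgn}(t)^{m_i+c_i}$. Matching absolute values gives \eqref{eq:orbit-relation} with $w(i)=i$ (note $a_i+b_i=2s_i$). Matching signs gives \eqref{eq:fixed-parity-q}, since the conormal weight contributes $\mathrm{sgn}^{q_i}$.

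Finally, $\sum_i q_i=0$ is immediate, since each $\kappa(i,j)$ enters $q_i$ positively and $q_j$ negatively. The inequality $q_1\le0$ holds because index $1$ is never the larger entry of a pair in $I_w$, so $q_1=-\sum_{(j,1)\in I_w}\kappa(j,1)\le0$.

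The main obstacle is the second step. It requires correctly identifying the conormal directions and their weights for each orbit, fixing the signs and conjugation conventions so that the weights come out exactly as the $\kappa(i,j)$ contributions to $q_i$ and $q_j$, and checking that the modular characters cancel. I would first verify all of this carefully for $n=2$ (where $w=1$ or $w=(12)$), and then reduce the general case to rank-one computations root by root.
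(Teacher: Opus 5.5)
Your proposal is correct and follows essentially the same route as the paper. You stratify by orbits indexed by involutions $w$ and extract a weight of $\Sym^k$ of the normal space on $A_n^w$ of the form $\prod_{(i,j)\in I_w}(t_i/t_j)^{\kappa(i,j)}=\prod_i t_i^{q_i}$. You then match holomorphic and antiholomorphic exponents on the $\C^\times$ factors (two-cycles) and absolute values and signs on the $\R^\times$ factors (fixed points), and read off $\sum_i q_i=0$ and $q_1\le 0$ from the shape of $I_w$; all of this is what the paper does.

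The differences are cosmetic. The paper first twists by $\chi_{0,s_0/2}\circ\det$ to reduce to plain $G_n(\R)$-distinction. It also takes the step you flag as the main obstacle directly from \cite[Lemma~4.1 and Corollary~4.2]{Kem2015} rather than recomputing it: the normal-space weights, their orientation $\prod_i t_i^{q_i}=\sigma'_{\mathbf c}|_{A_n^w}\,\lvert\det\rvert_{\R}^{s_0}$ (this orientation, not its inverse, is what yields $q_1\le0$), and the cancellation of the modular characters.
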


\begin{proof}
Put
\[
\eta_i
=
\chi_{m_i+c_i,\,s_i+\frac{c_i}{2}+\frac{s_0}{2}},
\qquad
\eta=\eta_1\boxtimes\cdots\boxtimes\eta_n.
\]
Then
\[
\eta_1\times\cdots\times\eta_n
\cong
\sigma_{\mathbf c}\otimes
\left(\chi_{0,s_0/2}\circ\det\right).
\]
Since the restriction of $\chi_{0,s_0/2}$ to $\mathbb R^\times$ is
$\lvert\>\cdot\>\rvert_{\mathbb R}^{s_0}$, the hypothesis gives
\[
\operatorname{Hom}_{G_n(\mathbb R)}
\left(\eta_1\times\cdots\times\eta_n,\mathbb C\right)
\neq0.
\]

We apply the orbit stratification and Frobenius reciprocity argument
of \cite[Section~4]{Kem2015}. The $B_n(\mathbb C)$-orbits on
$G_n(\mathbb C)/G_n(\mathbb R)$ are indexed by involutions in $S_n$.
For an involution $w$, the diagonal subgroup of the corresponding
stabilizer is
\[
A_n^w
=
\left\{
\operatorname{diag}(t_1,\ldots,t_n)\in A_n(\mathbb C):
t_i=\overline{t_{w(i)}}\text{ for every }i
\right\}.
\]
The nonvanishing above implies that, for some involution $w$ and some
integer $k\geq0$, the character $\eta|_{A_n^w}$ occurs as a weight in
the $k$-th symmetric power of the complexified normal space to the
corresponding orbit. By the normal-space calculation in
\cite[Lemma~4.1 and Corollary~4.2]{Kem2015}, this weight is the
restriction of a character
\[
\alpha_\kappa\bigl(\operatorname{diag}(t_1,\ldots,t_n)\bigr)
=
\prod_{(i,j)\in I_w}
\left(\frac{t_i}{t_j}\right)^{\kappa(i,j)}
=
\prod_{i=1}^n t_i^{q_i},
\]
where $\kappa:I_w\to\mathbb Z_{\geq0}$ satisfies
$\sum_{(i,j)\in I_w}\kappa(i,j)=k$ and $q_i=q_i(\kappa)$.
Thus
\[
\prod_{i=1}^n t_i^{q_i}
=
\prod_{i=1}^n\eta_i(t_i),
\qquad
\operatorname{diag}(t_1,\ldots,t_n)\in A_n^w.
\]

Suppose first that $w(i)=i$. For every $x\in\mathbb R^\times$, the
diagonal element with $x$ in the $i$-th position and $1$ elsewhere
belongs to $A_n^w$. Evaluating the preceding identity at this element
gives
\[
x^{q_i}
=
\eta_i(x)
=
\operatorname{sgn}(x)^{m_i+c_i}
|x|_{\mathbb R}^{\,2s_i+c_i+s_0}.
\]
Taking $x>0$ and varying $x$, we obtain
\[
q_i=2s_i+c_i+s_0=s_0+a_i+b_i+c_i,
\]
which proves \eqref{eq:orbit-relation} at the fixed point $i$.
Taking $x=-1$ gives
\[
(-1)^{q_i}=(-1)^{m_i+c_i},
\]
and hence also proves \eqref{eq:fixed-parity-q}.

We next consider a two-element orbit $\{i,j\}$ of $w$, so that
$j=w(i)\neq i$. For every $z\in\mathbb C^\times$, the diagonal
element with $z$ in position $i$, $\overline z$ in position $j$,
and $1$ elsewhere belongs to $A_n^w$. Therefore
\[
z^{q_i}\overline z^{\,q_j}
=
\eta_i(z)\eta_j(\overline z).
\]
Using
\[
z^r=\chi_{r,r/2}(z),
\qquad
\chi_{r,t}(\overline z)=\chi_{-r,t}(z),
\]
we can write this as the following equality of characters of
$\mathbb C^\times$:
\[
\chi_{q_i-q_j,\,\frac{q_i+q_j}{2}}
=
\chi_{m_i+c_i-m_j-c_j,\,
       s_i+s_j+\frac{c_i+c_j}{2}+s_0}.
\]
By uniqueness of the parameters in the expression $\chi_{r,t}$,
\[
\begin{aligned}
q_i-q_j&=m_i+c_i-m_j-c_j,\\
q_i+q_j&=2s_i+2s_j+c_i+c_j+2s_0.
\end{aligned}
\]
Adding these two equations and dividing by $2$, we find
\[
q_i
=s_0+s_i+s_j+\frac{m_i-m_j}{2}+c_i
=s_0+a_i+b_j+c_i.
\]
Interchanging $i$ and $j$ gives
$q_j=s_0+a_j+b_i+c_j$.
Since $j=w(i)$ and $i=w(j)$, this proves
\eqref{eq:orbit-relation} for both indices.

Finally, each $\kappa(i,j)$ contributes once with a positive sign
and once with a negative sign to $\sum_i q_i$. Hence
\[
\sum_{i=1}^n q_i
=
\sum_{(i,j)\in I_w}\kappa(i,j)
-
\sum_{(i,j)\in I_w}\kappa(i,j)
=0.
\]
Moreover, no pair of the form $(1,j)$ belongs to $I_w$. Thus
\[
q_1
=-\sum_{j:(j,1)\in I_w}\kappa(j,1)
\leq0.
\]
This proves \eqref{eq:q-properties} and completes the proof.
\end{proof}

\begin{prop}\label{principal-cong-inverse}
Let
\[
    \pi = \chi_{m_1,s_1} \times \cdots \times \chi_{m_n,s_n}
\]
be in general position. Let $s_0 \in \mathbb{C}$ and $m \in \mathbb{Z}_{\geq 0}$. Then
\[
    \operatorname{Hom}_{G_n(\R)} \left(
        \pi \widehat{\otimes} \Sym^m(\mathbb{C}^n), \lvert\det\rvert_\mathbb R^{-s_0}
    \right) \neq 0
\]
if and only if there exist integers
\[
    c_1, \ldots, c_n \geq 0, \qquad \sum_{i=1}^{n} c_i = m,
\]
and an involution $w \in S_n$ such that
\begin{equation}\label{eq:asai-congruence-inverse}
    \begin{aligned}
        m_{w(i)} + c_{w(i)} &= m_i + c_i, && \quad 1 \leq i \leq n, \\
        s_{w(i)} + s_i + \frac{c_{w(i)} + c_i}{2} + s_0 &= 0, && \quad 1 \leq i \leq n,
    \end{aligned}
\end{equation}
and
\begin{equation}\label{eq:asai-fixed-parity}
    m_i + c_i \equiv 0 \pmod 2 \qquad \text{whenever } w(i) = i.
\end{equation}
\end{prop}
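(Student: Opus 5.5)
The plan is to split $\pi\widehat{\otimes}\Sym^m(\C^n)$ completely along the graded pieces $\sigma_{\mathbf c}$ using infinitesimal characters. This reduces the Hom space in the statement to a direct sum of $\operatorname{Hom}_{G_n(\R)}(\sigma_{\mathbf c},\lvert\det\rvert_\R^{-s_0})$ over $\mathbf c\in\mathcal C_m$. Each of these is then a distinction problem for a principal series that is again in general position.

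\emph{Step 1: separating the graded pieces.} With the notation of \eqref{eq:sigma-c}, the factor $\chi_{m_i+c_i,\,s_i+c_i/2}$ of $\sigma_{\mathbf c}$ has parameters
\[
a_i'=a_i+c_i,\qquad b_i'=b_i .
\]
So the infinitesimal character of $\sigma_{\mathbf c}$ is determined by the unordered multisets $\{a_i+c_i\}_i$ and $\{b_i\}_i$, up to the usual identification for $G_n(\C)$.

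Suppose $\mathbf c\neq\mathbf c'$ give the same infinitesimal character. Then there is a permutation $\tau$ with $a_i+c_i=a_{\tau(i)}+c'_{\tau(i)}$ for all $i$. Since $a_i-a_j\notin\Z$ for $i\neq j$, this forces $\tau=\mathrm{id}$, and hence $\mathbf c=\mathbf c'$. I would check this carefully against the possible swap of the holomorphic and antiholomorphic parameters; the condition $a_i-b_j\notin\Z$ is not assumed, so this swap is the point to verify.

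Granting the separation, Lemma~\ref{lem:primary-decomposition} applies to $X=\pi\widehat{\otimes}\rho$. Each primary component $X[\zeta]$ has at most one of the subquotients $M_i/M_{i-1}$ as a constituent, by exactness of $X\mapsto X[\zeta]$ applied to the filtration $(M_i)$. Therefore $X\cong\bigoplus_{\mathbf c\in\mathcal C_m}\sigma_{\mathbf c}$ topologically and $G_n(\C)$-equivariantly, and
\[
\operatorname{Hom}_{G_n(\R)}\bigl(\pi\widehat{\otimes}\rho,\lvert\det\rvert_\R^{-s_0}\bigr)\cong\bigoplus_{\mathbf c\in\mathcal C_m}\operatorname{Hom}_{G_n(\R)}\bigl(\sigma_{\mathbf c},\lvert\det\rvert_\R^{-s_0}\bigr).
\]
This direct-sum decomposition gives both directions of the reduction at once. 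It also makes Lemma~\ref{lem:graded-piece-map} superfluous here.

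\emph{Step 2: distinction of $\sigma_{\mathbf c}$.} The parameters $(a_i+c_i,b_i)$ still satisfy all the general position conditions, because the shifts $c_i$ are integers. So $\sigma_{\mathbf c}$ is irreducible and generic, and so is its twist $\eta_1\times\cdots\times\eta_n$ from the proof of Lemma~\ref{lem:asai-orbit-data}.

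For irreducible principal series I would invoke the $G_n(\R)$-distinction criterion of \cite[Theorem~6.7]{JY2026}, or equivalently the result of Kemp's orbit analysis \cite{Kem2015}. In the form needed, it says that $\eta_1\times\cdots\times\eta_n$ is $G_n(\R)$-distinguished if and only if there is an involution $w$ such that
\[
\eta_i(z)\,\eta_{w(i)}(\overline z)=1\quad\text{for all } z\in\C^\times \text{ and all } i,
\qquad
\eta_i|_{\R^\times}=1\ \text{ whenever } w(i)=i .
\]

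The remaining work is translation into the stated conditions. Write $\eta_{w(i)}(\overline z)=\chi_{-(m_{w(i)}+c_{w(i)}),\,\cdot}(z)$ and compare the parameters $(r,t)$ in $\chi_{r,t}$:
\begin{itemize}
\item the discrete parts give $m_{w(i)}+c_{w(i)}=m_i+c_i$;
\item the continuous parts give $s_i+s_{w(i)}+\tfrac{c_i+c_{w(i)}}2+s_0=0$, which is \eqref{eq:asai-congruence-inverse};
\item at a fixed point, the condition $\eta_i|_{\R^\times}=\operatorname{sgn}^{m_i+c_i}\lvert\cdot\rvert_\R^{2s_i+c_i+s_0}=1$ gives \eqref{eq:asai-fixed-parity}, together with the already-listed equation for $w(i)=i$.
\end{itemize}

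\emph{Main obstacle.} I expect the crux to be the necessity direction of the distinction criterion if it is to be derived from Lemma~\ref{lem:asai-orbit-data} rather than quoted. That lemma allows a nonzero $\kappa$, and one must show that $q_i(\kappa)=0$ for all $i$.

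The approach I would try first uses irreducibility. Since $\sigma_{\mathbf c}$ is in general position, it is isomorphic to $\eta_{\tau(1)}\times\cdots\times\eta_{\tau(n)}$ for every permutation $\tau$. Applying Lemma~\ref{lem:asai-orbit-data} to each reordering, the index placed first always has $q\le0$. The values $s_0+a_i+b_{w(i)}+c_i$ are determined up to the choice of involution, and general position ($a_i-a_j,\ b_i-b_j\notin\Z$) constrains which involutions can occur. Combining these with $\sum_i q_i=0$ should force $q\equiv 0$.

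If this bookkeeping becomes unwieldy, I would fall back on citing \cite[Theorem~6.7]{JY2026} directly for the irreducible generic representation $\sigma_{\mathbf c}\otimes(\chi_{0,s_0/2}\circ\det)$.
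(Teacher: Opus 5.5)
Your Step 1 and your final translation are correct. They repackage the paper's argument: the paper gets the ``only if'' direction from Lemma~\ref{lem:graded-piece-map} and the ``if'' direction by projecting onto the $\zeta$-primary component, while your full splitting $\pi\widehat{\otimes}\rho\cong\bigoplus_{\mathbf c}\sigma_{\mathbf c}$ handles both at once. The holomorphic/antiholomorphic swap you flag does not arise. The infinitesimal character is an ordered pair of multisets, since $Z(\mathfrak g_n)\cong Z(\mathfrak{gl}_n)\otimes Z(\mathfrak{gl}_n)$ with fixed holomorphic and antiholomorphic factors. Even a swap would force $\{a_i+d_i\}=\{b_i\}=\{a_i+c_i\}$, and the same argument would apply.

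The gap is in Step~2, and it concerns the ``if'' direction, not the place where you put the main obstacle. As this paper describes it, \cite[Theorem~6.7]{JY2026} is an analytic criterion: the $m=0$ case of Theorem~\ref{1.1}, saying that distinction is equivalent to an exceptional pole of level $0$. It is not an explicit condition on the inducing characters. The orbit analysis of \cite{Kem2015}, as used in Lemma~\ref{lem:asai-orbit-data}, only gives necessary conditions. An orbit-stratification argument bounds the Hom space from above but cannot by itself produce a nonzero invariant functional. So nothing in your proposal shows that \eqref{eq:asai-congruence-inverse} and \eqref{eq:asai-fixed-parity} imply $\operatorname{Hom}_{G_n(\R)}(\sigma_{\mathbf c},\lvert\det\rvert_\R^{-s_0})\neq0$. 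The paper takes both directions of the criterion from \cite[Theorem~1.1]{PWZ2026}. You need that result, or an independent construction of the invariant functional.

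Your reordering idea, by contrast, does settle the ``only if'' direction without external input. Since $b_j-b_{j'}\notin\Z$ for $j\neq j'$, the relation \eqref{eq:orbit-relation} forces $w(i)$ to be the unique $j$ with $s_0+a_i+b_j\in\Z$. Hence the involution (transported back to the original labels) and $q_i=s_0+a_i+b_{w(i)}+c_i$ do not depend on the ordering of the inducing characters. Reordering is legitimate because $\sigma_{\mathbf c}$ is irreducible. Applying Lemma~\ref{lem:asai-orbit-data} to an ordering with $i$ placed first gives $q_i\le0$ for every $i$, so $\sum_i q_i=0$ forces $q\equiv0$. Subtracting and adding \eqref{eq:orbit-relation} for $i$ and $w(i)$, using $a_i-b_i=m_i$, together with \eqref{eq:fixed-parity-q}, then yields exactly \eqref{eq:asai-congruence-inverse} and \eqref{eq:asai-fixed-parity}. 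Only the existence direction genuinely needs the external theorem.
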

\begin{proof}
Suppose first that the Hom space in the statement is nonzero. By Lemma~\ref{lem:graded-piece-map}, there exists a multi-index
$\mathbf c=(c_1,\ldots,c_n)\in\mathcal C_m$ such that
\[
\operatorname{Hom}_{G_n(\R)}
\left(
\sigma_{\mathbf c},
\lvert\det\rvert_\mathbb R^{-s_0}
\right)
\neq0.
\]
The holomorphic and antiholomorphic parameters of $\sigma_{\mathbf c}$ are
\[
a_i+c_i
\qquad\text{and}\qquad
b_i,
\qquad 1\leq i\leq n.
\]
Since $\pi$ is in general position, so is $\sigma_{\mathbf c}$; in particular,
$\sigma_{\mathbf c}$ is irreducible and generic.

Put
\[
\Sigma_{\mathbf c}
=
\sigma_{\mathbf c}\otimes
\left(\chi_{0,s_0/2}\circ\det\right).
\]
Twisting preserves irreducibility and genericity, so
$\Sigma_{\mathbf c}$ is irreducible and generic. Further, $\Sigma_{\mathbf c}$ is $G_n(\R)$-distinguished. For a character $\chi$ of $\mathbb C^\times$, we write
$\overline{\chi}(z)=\chi(\overline z)$. By
\cite[Theorem~1.1]{PWZ2026}, there exists an involution $w\in S_n$ such that
\[
\chi_{m_{w(i)}+c_{w(i)},\,
      s_{w(i)}+\frac{c_{w(i)}}2+\frac{s_0}{2}}
=
\overline{
\chi_{m_i+c_i,\,
      s_i+\frac{c_i}{2}+\frac{s_0}{2}}
}^{\,-1}
\]
for every $1\leq i\leq n$, and
\[
\chi_{m_i+c_i,\,
      s_i+\frac{c_i}{2}+\frac{s_0}{2}}(-1)=1
\]
whenever $w(i)=i$.  Since
\[
\overline{\chi_{r,t}}^{\,-1}=\chi_{r,-t},
\]
these conditions are precisely
\eqref{eq:asai-congruence-inverse} and
\eqref{eq:asai-fixed-parity}.

Conversely, suppose that there exist integers
$c_1,\ldots,c_n\geq0$, with $\sum_{i=1}^{n}c_i=m$, and an involution
$w\in S_n$ satisfying
\eqref{eq:asai-congruence-inverse} and
\eqref{eq:asai-fixed-parity}. Put
$\mathbf c=(c_1,\ldots,c_n)$. As above, $\sigma_{\mathbf c}$ is irreducible and generic. The conditions
\eqref{eq:asai-congruence-inverse} and
\eqref{eq:asai-fixed-parity}, together with
\cite[Theorem~1.1]{PWZ2026}, give
\[
\operatorname{Hom}_{G_n(\R)}
\left(
\sigma_{\mathbf c},
\lvert\det\rvert_\mathbb R^{-s_0}
\right)
\neq0.
\]

Put $\mathcal V=\pi\widehat\otimes\Sym^m(\mathbb C^n)$.
For $\mathbf d=(d_1,\ldots,d_n)\in\mathcal C_m$, write
$\sigma_{\mathbf d}$ for the corresponding graded quotient.
Let $j$ be the index for which
\[
M_j/M_{j-1}\cong\sigma_{\mathbf c},
\]
and let $\zeta$ be the infinitesimal character of $\sigma_{\mathbf c}$.

We claim that
\[
\sigma_{\mathbf d}[\zeta]=0
\qquad
\text{whenever }\mathbf d\neq\mathbf c.
\]
Indeed, the infinitesimal character of $\sigma_{\mathbf d}$ is represented by
the pair of multisets
\[
\left(
\{a_i+d_i:1\leq i\leq n\},
\{b_i:1\leq i\leq n\}
\right),
\]
whereas that of $\sigma_{\mathbf c}$ is represented by
\[
\left(
\{a_i+c_i:1\leq i\leq n\},
\{b_i:1\leq i\leq n\}
\right).
\]
If these infinitesimal characters were equal, there would be a permutation
$u\in S_n$ such that
\[
a_i+d_i=a_{u(i)}+c_{u(i)},
\qquad 1\leq i\leq n.
\]
Thus $a_i-a_{u(i)}\in\mathbb Z$. The general-position assumption implies
$u(i)=i$ for every $i$, and consequently $d_i=c_i$ for every $i$.
This proves the claim.

By Lemma~\ref{lem:primary-decomposition} and exactness along the filtration,
the claim gives
\[
M_{j-1}[\zeta]=0
\qquad\text{and}\qquad
(\mathcal V/M_j)[\zeta]=0.
\]
Applying $X\mapsto X[\zeta]$ to
\[
0\longrightarrow M_{j-1}\longrightarrow M_j
\longrightarrow\sigma_{\mathbf c}\longrightarrow0
\]
and
\[
0\longrightarrow M_j\longrightarrow\mathcal V
\longrightarrow\mathcal V/M_j\longrightarrow0,
\]
we obtain
\[
\mathcal V[\zeta]\cong\sigma_{\mathbf c}.
\]
Composing the primary projection with this isomorphism gives a
$G_n(\C)$-equivariant surjection
\[
\mathcal V
\longtwoheadrightarrow
\mathcal V[\zeta]
\cong
\sigma_{\mathbf c}.
\]
Composing this map with a nonzero element of
\[
\operatorname{Hom}_{G_n(\R)}
\left(
\sigma_{\mathbf c},
\lvert\det\rvert_\mathbb R^{-s_0}
\right)
\]
gives
\[
\operatorname{Hom}_{G_n(\R)}
\left(
\pi\widehat\otimes\Sym^m(\mathbb C^n),
\lvert\det\rvert_\mathbb R^{-s_0}
\right)
\neq0.
\]
This proves the converse.
\end{proof}

We now show that the nonvanishing of the Hom space in
Theorem~\ref{1.1}~$(ii)$ implies that $s_0$ is a pole of the
Asai $L$-function and give a lower bound for its order.
In general position, we compute the order exactly.

\begin{prop}\label{pole}
Let
\[
\pi=
\chi_{m_1,s_1}\times\cdots\times\chi_{m_n,s_n}
\]
be an irreducible generic representation of $G_n(\C)$. Let
$s_0\in\mathbb C$ and $m\in\mathbb Z_{\geq0}$, and suppose that
\[
\operatorname{Hom}_{G_n(\R)}
\left(
\pi\widehat{\otimes}\Sym^m(\mathbb C^n),
\lvert\det\rvert_\mathbb R^{-s_0}
\right)
\neq0.
\]
After reordering the inducing characters, assume that
\[
m_1\geq m_2\geq\cdots\geq m_n.
\]
Then there exist a multi-index
$\mathbf c=(c_1,\ldots,c_n)\in\mathcal C_m$, an involution $w\in S_n$,
and a function $\kappa:I_w\to\mathbb Z_{\geq0}$ as in
Lemma~\ref{lem:asai-orbit-data}. Put
\[
q_i=q_i(\kappa),
\qquad
d_i=c_i-q_i,
\qquad 1\leq i\leq n,
\]
and set
\begin{equation}\label{eq:asai-pole-lower-bound}
D(\mathbf c,w,\kappa)
=
\#\left\{i:w(i)=i,\ d_i\geq0\right\}
+
\#\left\{i:i<w(i),\ d_i\geq0\right\}.
\end{equation}
Then $L(s,\pi,\operatorname{As})$ has a pole at $s_0$ of order at least $D(\mathbf c,w,\kappa)\geq1.$

If, in addition, $\pi$ is in general position and
$\mathbf c=(c_1,\ldots,c_n)$ and $w$ are as in
Proposition~\ref{principal-cong-inverse}, then the order of the pole is
\begin{equation}\label{eq:asai-pole-order-gp}
\begin{aligned}
D_{\mathrm{gp}}
&=
\#\{i:w(i)=i\}+\#\{i:i<w(i)\}\\
&=
\frac{n+\#\operatorname{Fix}(w)}{2}
=
\left\lceil\frac n2\right\rceil.
\end{aligned}
\end{equation}
\end{prop}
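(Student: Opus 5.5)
The plan is to compute $L(s,\pi,\mathrm{As})$ explicitly as a product of Gamma factors and match the pole conditions with the orbit relations of Lemma~\ref{lem:asai-orbit-data}. Since $\phi_\pi=\bigoplus_i\chi_{m_i,s_i}$, we have
\[
\mathrm{As}(\phi_\pi)=\bigoplus_{i}\mathrm{As}(\chi_{m_i,s_i})\oplus\bigoplus_{i<j}\mathrm{Ind}_{W_\C}^{W_\R}\bigl(\chi_{m_i,s_i}\overline{\chi_{m_j,s_j}}\bigr).
\]
The one-dimensional piece $\mathrm{As}(\chi_{m_i,s_i})$ corresponds to the character $\mathrm{sgn}^{m_i}|\cdot|_\R^{2s_i}$ of $\R^\times$. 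Hence
\[
L(s,\pi,\mathrm{As})=\prod_i\Gamma_\R(s+2s_i+\delta_i)\prod_{i<j}\Gamma_\C\Bigl(s+s_i+s_j+\tfrac{|m_i-m_j|}{2}\Bigr),
\]
with $\delta_i\in\{0,1\}$ and $\delta_i\equiv m_i\pmod 2$. Every factor has only simple poles, so the order of the pole at $s_0$ equals the number of factors that have a pole there. These are:
\begin{itemize}
\item for $\Gamma_\R$-factor $i$: $s_0+2s_i=-k$ with $k\in\Z_{\ge0}$ and $k\equiv m_i\pmod 2$;
\item for $\Gamma_\C$-factor $(i,j)$ with $i<j$: using $m_1\ge\cdots\ge m_n$, the argument is $s+a_i+b_j$, and the condition is $s_0+a_i+b_j\in\Z_{\le0}$.
\end{itemize}

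For the lower bound, take $\mathbf c$ from Lemma~\ref{lem:graded-piece-map} and $w,\kappa$ from Lemma~\ref{lem:asai-orbit-data}. Then \eqref{eq:orbit-relation} reads $s_0+a_i+b_{w(i)}=-d_i$ with $d_i\in\Z$.
\begin{itemize}
\item If $w(i)=i$ and $d_i\ge0$, then $s_0+2s_i=-d_i$. By \eqref{eq:fixed-parity-q}, $d_i=c_i-q_i\equiv m_i\pmod 2$, so the $\Gamma_\R$-factor $i$ has a pole at $s_0$.
\item If $i<j=w(i)$ and $d_i\ge0$, then the $\Gamma_\C$-factor $(i,j)$ has a pole at $s_0$.
\end{itemize}
These factors are pairwise distinct, so the order is at least $D(\mathbf c,w,\kappa)$. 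To see $D\ge1$, use \eqref{eq:q-properties}: $q_1\le0$ gives $d_1=c_1-q_1\ge0$. Index $1$ is either fixed by $w$ or satisfies $1<w(1)$, so it always contributes to $D$.

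In general position, take $\mathbf c,w$ as in Proposition~\ref{principal-cong-inverse}.
\begin{itemize}
\item If $w(i)=i$, then \eqref{eq:asai-congruence-inverse} gives $s_0+2s_i=-c_i$, and \eqref{eq:asai-fixed-parity} gives $c_i\equiv m_i\pmod 2$, so the $\Gamma_\R$-factor $i$ has a pole.
\item If $i<j=w(i)$, then $m_i-m_j=c_j-c_i$, so $s_0+a_i+b_j=s_0+s_i+s_j+\tfrac{c_j-c_i}{2}=-c_i\le0$, and the $\Gamma_\C$-factor $(i,j)$ has a pole.
\end{itemize}
This gives the lower bound $\#\operatorname{Fix}(w)+\#\{i<w(i)\}$.

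For the matching upper bound, use general position to show no other factor can have a pole at $s_0$.
\begin{itemize}
\item A $\Gamma_\C$-factor $(i,j)$ with a pole has $s_0+a_i+b_j\in\Z$. Since also $s_0+a_i+b_{w(i)}\in\Z$, we get $b_j-b_{w(i)}\in\Z$, which forces $j=w(i)$.
\item A $\Gamma_\R$-factor $i$ with a pole has $s_0+a_i+b_i\in\Z$, which similarly forces $w(i)=i$.
\end{itemize}
So the order is exactly $\#\operatorname{Fix}(w)+\#\{i<w(i)\}=(n+\#\operatorname{Fix}(w))/2$.

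Finally, to get $\lceil n/2\rceil$, show $w$ has at most one fixed point. Suppose $i\ne k$ are both fixed. Then $2(s_i-s_k)=c_k-c_i$, and
\[
a_i-a_k=\frac{(c_k-c_i)+(m_i-m_k)}{2}.
\]
This is an integer by the parity conditions $c_i\equiv m_i$ and $c_k\equiv m_k\pmod 2$, contradicting general position. Since $\#\operatorname{Fix}(w)\equiv n\pmod 2$, we get $\#\operatorname{Fix}(w)\in\{0,1\}$ according to the parity of $n$, giving $\lceil n/2\rceil$.

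The step I expect to need the most care is the explicit Asai factorization. In particular, one must check that under the ordering $m_1\ge\cdots\ge m_n$, the $\Gamma_\C$-argument for $i<j$ is exactly $s+a_i+b_j$ and not $s+a_j+b_i$, so that it lines up with the index convention $i<w(i)$ in \eqref{eq:asai-pole-lower-bound}.
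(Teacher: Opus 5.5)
Your proposal is correct and follows essentially the same route as the paper. It uses the explicit $\Gamma_{\mathbb R}/\Gamma_{\mathbb C}$ factorization of $L(s,\pi,\mathrm{As})$ with the $a_i+b_j$ convention under $m_1\ge\cdots\ge m_n$, matches the orbit relation $s_0+a_i+b_{w(i)}=-d_i$ (with $q_1\le0$ for positivity), and then in general position applies the same integrality arguments ($b_j-b_{w(i)}\in\mathbb Z$, and $a_i-a_k\in\mathbb Z$ for two fixed points) to get exactly $\lceil n/2\rceil$.
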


\begin{proof}
Because $\pi$ is irreducible, we can reorder its inducing characters without changing the isomorphism class of
$\pi$; see \cite{SV1980}. We therefore use the realization for which
\[
m_1\geq m_2\geq\cdots\geq m_n.
\]
By Lemma~\ref{lem:graded-piece-map}, there exists
$\mathbf c=(c_1,\ldots,c_n)\in\mathcal C_m$ such that
\[
\operatorname{Hom}_{G_n(\R)}
\left(
\sigma_{\mathbf c},
\lvert\det\rvert_\mathbb R^{-s_0}
\right)
\neq0.
\]
Lemma~\ref{lem:asai-orbit-data} gives an involution $w\in S_n$ and a
function $\kappa:I_w\to\mathbb Z_{\geq0}$ such that, with
$q_i=q_i(\kappa)$,
\[
s_0+a_i+b_{w(i)}=q_i-c_i=-d_i.
\]

For each $1\leq i\leq n$, let
$\varepsilon_i\in\{0,1\}$ be determined by
\[
\varepsilon_i\equiv m_i\pmod2.
\]
By the local Langlands correspondence \cite{Kna1994} and the direct-sum
formula for Asai representations \cite[Section~2.2]{JY2026}, we have
\begin{equation}\label{eq:asai-L-principal-series}
\begin{aligned}
L(s,\pi,\operatorname{As})
={}&
\prod_{i=1}^{n}
\Gamma_{\mathbb R}
\left(s+2s_i+\varepsilon_i\right)\\
&\times
\prod_{1\leq i<j\leq n}
\Gamma_{\mathbb C}
\left(
 s+s_i+s_j+\frac{|m_i-m_j|}{2}
\right).
\end{aligned}
\end{equation}
Recall that $\Gamma_{\mathbb R}(z)$ has simple poles at
$2\mathbb Z_{\leq0}$, whereas $\Gamma_{\mathbb C}(z)$ has simple poles at
$\mathbb Z_{\leq0}$. Since Gamma functions have no zeros, these poles cannot
cancel. Thus, we inspect the factors at $s_0$.

\begin{enumerate}
\item[(i)]
\textit{Fixed points of $w$.}
Suppose that $w(i)=i$ and $d_i\geq0$. Then
\[
s_0+2s_i=-d_i.
\]
Moreover, \eqref{eq:fixed-parity-q} gives
\[
d_i=c_i-q_i\equiv m_i\equiv\varepsilon_i\pmod2.
\]
Consequently,
\[
s_0+2s_i+\varepsilon_i
=-d_i+\varepsilon_i
\in2\mathbb Z_{\leq0}.
\]
Thus the $i$-th diagonal $\Gamma_{\mathbb R}$-factor contributes one
simple pole.

\item[(ii)]
\textit{Two-element orbits of $w$.}
Suppose that $i<w(i)$ and $d_i\geq0$. Since the inducing characters have
been ordered so that $m_i\geq m_{w(i)}$, the corresponding
$\Gamma_{\mathbb C}$-factor in \eqref{eq:asai-L-principal-series} is
\[
\Gamma_{\mathbb C}
\left(s+a_i+b_{w(i)}\right).
\]
At $s_0$, its argument is
\[
s_0+a_i+b_{w(i)}=-d_i\in\mathbb Z_{\leq0}.
\]
Hence this factor contributes one simple pole.

\item[(iii)]
\textit{Positivity of the lower bound.}
By \eqref{eq:q-properties}, we have $q_1\leq0$, and therefore
\[
d_1=c_1-q_1\geq0.
\]
If $w(1)=1$, the index $1$ is counted in the first term of
\eqref{eq:asai-pole-lower-bound}. Otherwise, $1<w(1)$, so it is
counted in the second term. Hence $D(\mathbf c,w,\kappa)\geq1$.
\end{enumerate}

The factors identified in (i) and (ii) are distinct. Together with
the positivity established in (iii), this shows that
$L(s,\pi,\operatorname{As})$ has a pole at $s_0$ of order at least
$D(\mathbf c,w,\kappa)$.

Assume now that $\pi$ is in general position, and let $\mathbf c$ and $w$ be
as in Proposition~\ref{principal-cong-inverse}. We first determine the factors
that have a pole at $s_0$. If $w(i)=i$, then
\[
s_0+2s_i=-c_i
\qquad\text{and}\qquad
c_i\equiv m_i\equiv\varepsilon_i\pmod2.
\]
Therefore
\[
s_0+2s_i+\varepsilon_i
=-c_i+\varepsilon_i
\in2\mathbb Z_{\leq0},
\]
and the $i$-th diagonal factor has a simple pole.

If $i<w(i)$, then \eqref{eq:asai-congruence-inverse} gives
\[
m_i+c_i=m_{w(i)}+c_{w(i)}
\]
and
\[
s_0+s_i+s_{w(i)}
=-\frac{c_i+c_{w(i)}}{2}.
\]
Consequently,
\[
\begin{aligned}
s_0+s_i+s_{w(i)}
+\frac{|m_i-m_{w(i)}|}{2}
&=
-\frac{c_i+c_{w(i)}}{2}
+\frac{|c_i-c_{w(i)}|}{2}\\
&=-\min\{c_i,c_{w(i)}\}
\in\mathbb Z_{\leq0}.
\end{aligned}
\]
Thus the factor attached to the pair $\{i,w(i)\}$ has a simple pole.
This gives the lower bound $D_{\mathrm{gp}}$ in
\eqref{eq:asai-pole-order-gp}.

It remains to show that no other factor has a pole. Suppose first that the
$i$-th diagonal factor has a pole and that $w(i)\neq i$. Then
\[
s_0+a_i+b_i\in\mathbb Z.
\]
On the other hand, \eqref{eq:asai-congruence-inverse} gives
\[
s_0+a_i+b_{w(i)}=-c_i\in\mathbb Z.
\]
It follows that
\[
b_i-b_{w(i)}\in\mathbb Z,
\]
contrary to the general-position assumption.

Finally, let $i<j$ with $j\neq w(i)$. Since $m_i\geq m_j$,
the corresponding factor is
\[
\Gamma_{\mathbb C}(s+a_i+b_j).
\]
If this factor had a pole at $s_0$, then
$s_0+a_i+b_j\in\mathbb Z$. Since
\[
s_0+a_i+b_{w(i)}=-c_i\in\mathbb Z,
\]
we would obtain
\[
b_j-b_{w(i)}\in\mathbb Z.
\]
The general-position assumption would then imply $j=w(i)$, contrary to the
choice of the pair. Hence no other off-diagonal factor contributes a pole.

Adding the contributions, the order of the pole at $s_0$ is
$D_{\mathrm{gp}}$. To obtain the last equality in
\eqref{eq:asai-pole-order-gp}, observe that a fixed point $i$
of $w$ satisfies
\[
2s_i+c_i+s_0=0,
\qquad
m_i+c_i\equiv0\pmod2.
\]
Consequently,
\[
a_i=s_i+\frac{m_i}{2}
=
-\frac{s_0}{2}+\frac{m_i-c_i}{2}
\in-\frac{s_0}{2}+\mathbb Z.
\]
Two distinct fixed points would therefore contradict the
condition $a_i-a_j\notin\mathbb Z$. Thus $w$ has at most one
fixed point. Since
$\#\operatorname{Fix}(w)\equiv n\pmod2$, this number is $0$
when $n$ is even and $1$ when $n$ is odd. Hence
\[
D_{\mathrm{gp}}
=
\frac{n+\#\operatorname{Fix}(w)}2
=
\left\lceil\frac n2\right\rceil.
\]
\end{proof}
\end{section}

\begin{section}{Proof of the Main Theorem}\label{s4}

For $\pi\in\Irr(G_n(\C))$ and $s\in\C$, put
\begin{equation}\label{eq:Hspace}
\mathcal H_s(\pi)
=
\operatorname{Hom}_{G_n(\R)}
\left(
\pi\widehat{\otimes}\mathcal S,
\lvert\det\rvert_\mathbb R^{-s}
\right).
\end{equation}

\begin{prop}[All-parameter uniqueness]\label{prop:all-s-uniqueness}
Let $\pi\in\Irr(G_n(\C))$. For every $s\in\C$, we have
\[
 \dim\mathcal H_s(\pi)\le1.
\]
This multiplicity bound also holds if $\lvert\det\rvert_\mathbb R^{-s}$ is replaced by any smooth character of $G_n(\R)$.
\end{prop}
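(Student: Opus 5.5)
The plan is to reduce the statement to the uniqueness theorem of Chen--Sun--Weng \cite[Theorem~1.4 and Remark~3.3]{CSW2026}. That result bounds by one the dimension of the space of $G_n(\R)$-invariant continuous functionals on $\pi\widehat\otimes\mathcal S\widehat\otimes\chi^{-1}$, for every irreducible Casselman--Wallach $\pi$ of $G_n(\C)$ and every character $\chi$ of $G_n(\R)$, with no condition on the parameter. First, replace $\lvert\det\rvert_\mathbb R^{-s}$ by an arbitrary smooth character $\chi$ of $G_n(\R)$. Every such character has the form $\chi=\mu\circ\det$ with $\mu$ a character of $\R^\times$. I would then like to absorb $\chi$ into $\pi$: choose a character $\tilde\mu$ of $\C^\times$ whose restriction to $\R^\times$ is $\mu$ (for instance $\mu=\operatorname{sgn}^\epsilon|\cdot|_\R^t$ extends as $\chi_{\epsilon,t/2}$). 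Then
\[
\operatorname{Hom}_{G_n(\R)}(\pi\widehat\otimes\mathcal S,\chi)\cong\operatorname{Hom}_{G_n(\R)}\bigl((\pi\otimes(\tilde\mu^{-1}\circ\det))\widehat\otimes\mathcal S,\mathbb C\bigr),
\]
and the twisted representation is still irreducible. So it suffices to prove that $\dim\operatorname{Hom}_{G_n(\R)}(\pi'\widehat\otimes\mathcal S,\mathbb C)\le1$ for every $\pi'\in\Irr(G_n(\C))$.

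Second, I would check that the action \eqref{eq:schwartz-row-action-asai} on $\mathcal S$ and the chosen topology agree with the setting of \cite{CSW2026}. Their statement may use column vectors, or the contragredient action $\Phi(xg^{-1})$, or the Fourier-transformed model. In that case I would transport via $g\mapsto{}^tg^{-1}$ (an automorphism of $G_n(\R)$ and of $G_n(\C)$) together with the Fourier transform on $\R^n$. This replaces $\pi'$ by its twist by the outer automorphism, which is again irreducible, so the bound transfers.

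The main obstacle is precisely the applicability of the cited uniqueness \emph{for all parameters}. In the paper's introduction, uniqueness is used to identify a functional with $\Lambda_{s_0}$, but the statement here must hold even at $s$ where the zeta integrals have poles. If \cite{CSW2026} only gives generic-$s$ uniqueness, I would instead argue directly. Filter $\mathcal S$ by the closed $G_n(\R)$-stable subspace $\mathcal S(\R^n\setminus\{0\})$ and the quotient of jets at $0$, whose graded pieces are $\operatorname{Sym}^m(\C^n)$ by \eqref{eq:schwartz-graded-asai}. On the open orbit $\R^n\setminus\{0\}=P\backslash G_n(\R)$, with $P$ the mirabolic, Frobenius reciprocity reduces invariant functionals to $\operatorname{Hom}_{P_n(\R)}(\pi',\delta)$, which is at most one-dimensional by the multiplicity-one theorem for the mirabolic pair. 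The jet part would contribute nothing in the generic case by the infinitesimal-character argument of Proposition~\ref{principal-cong-inverse}, but in general it could add dimension. Handling that contribution is exactly why I would rely on the cited theorem rather than on this dévissage.
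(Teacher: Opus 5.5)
Your proposal is correct and follows essentially the same route as the paper: both reduce the bound directly to the Chen--Sun--Weng uniqueness theorem (Theorem~1.4 with Remark~3.3). That theorem already holds for every parameter, so the d\'evissage fallback in your last paragraph is not needed. The only difference is where the character goes. The paper absorbs it into the Schwartz space, applying the cited result to $\mathcal S\otimes\chi^{-1}$ with the same action $(g\cdot\Phi)(x)=\Phi(xg)$. You absorb it into $\pi$ via an extension $\tilde\mu$ of $\mu$ to $\mathbb{C}^\times$, which is equally valid and needs only the untwisted form of the theorem.
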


\begin{proof}
Let $\chi$ be a smooth character of $G_n(\R)$. By
\cite[Theorem~1.4 and Remark~3.3]{CSW2026}, applied to
$\chi^{-1}$, we have
\begin{equation}\label{eq:CSW-theorem}
\dim\operatorname{Hom}_{G_n(\R)}
\left(
\pi\widehat{\otimes}(\mathcal S\otimes\chi^{-1}),
\mathbb C
\right)
\leq1.
\end{equation}
The action on $\mathcal S$ in that result is
$(g\cdot\Phi)(x)=\Phi(xg)$, exactly as in
\eqref{eq:schwartz-row-action-asai}. The Hom space in
\eqref{eq:CSW-theorem} is canonically isomorphic to
\[
\operatorname{Hom}_{G_n(\R)}
\left(\pi\widehat{\otimes}\mathcal S,\chi\right).
\]
Taking $\chi=\lvert\det\rvert_\mathbb R^{-s}$ proves the assertion.
\end{proof}

For the remainder of this section, let
$\pi\in\Irr_{\mathrm{gen}}(G_n(\C))$, and fix a
$G_n(\C)$-equivariant topological identification
\[
\pi\simeq\mathcal W(\pi,\psi).
\]
Recall the normalized Flicker functional
\begin{equation}\label{eq:Lambda-def}
\Lambda_s(W,\Phi)
=
\frac{I(s,W,\Phi)}{L(s,\pi,\operatorname{As})}.
\end{equation}
For every fixed $W$ and $\Phi$, this quotient extends to an entire
function of $s$ by \cite[Theorem~3.5.2]{BP2021}. By \cite[Theorem~6.1]{JY2026}, for each fixed $s \in \mathbb{C}$, $\Lambda_s$ is a continuous bilinear functional satisfying the equivariance property
\begin{equation}\label{eq:Lambda-equivariance}
\Lambda_s(g\cdot W,g\cdot\Phi)
=
\lvert\det g\rvert_{\mathbb R}^{-s}\Lambda_s(W,\Phi),
\qquad g\in G_n(\mathbb R).
\end{equation}
Consequently, $\Lambda_s\in\mathcal{H}_s(\pi)$. Furthermore, the same theorem guarantees that $\Lambda_s \neq 0$ for all $s \in \mathbb{C}$.

We realize $\operatorname{Sym}^m(\mathbb C^n)$ as the space of
homogeneous polynomials of degree $m$ in a row variable, with
the action
\[
(g\cdot P)(x)=P(xg).
\]
For $m\in\mathbb Z_{\geq0}$, define
\begin{equation}\label{eq:jet-map}
J_m:\mathcal S\longrightarrow\operatorname{Sym}^m(\mathbb C^n)
\end{equation}
by taking the homogeneous degree-$m$ Taylor term at the origin:
\[
J_m\Phi
=
\sum_{|\alpha|=m}
\frac{\partial^\alpha\Phi(0)}{\alpha!}X^\alpha.
\]
The chain rule shows that $J_m$ is $G_n(\R)$-equivariant. It is
continuous and surjective, and its restriction to the
filtration satisfies
\begin{equation}\label{eq:jet-properties}
J_m(\mathcal{S}^{m+1})=0,
\qquad
J_m|_{\mathcal{S}^m}:
\mathcal{S}^m\longtwoheadrightarrow
\operatorname{Sym}^m(\mathbb C^n),
\qquad
\ker(J_m|_{\mathcal{S}^m})
=
\mathcal{S}^{m+1}.
\end{equation}
Indeed, surjectivity follows by multiplying a homogeneous
polynomial of degree $m$ by a compactly supported smooth
function which is identically $1$ near the origin.

\begin{lemma}[The jet form]\label{lem:jet-form}
Let $s_0\in\mathbb C$ and $m\in\mathbb Z_{\geq0}$, and suppose that
\[
\operatorname{Hom}_{G_n(\R)}
\left(
\pi\widehat{\otimes}\operatorname{Sym}^m(\mathbb C^n),
\lvert\det\rvert_\mathbb R^{-s_0}
\right)
\neq0.
\]
Then there exists $T_m\in\mathcal H_{s_0}(\pi)$ that
vanishes identically on
$\mathcal W(\pi,\psi)\times\mathcal S^{m+1}$
but is not identically zero on
$\mathcal W(\pi,\psi)\times\mathcal S^m$.
\end{lemma}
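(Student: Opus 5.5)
Take a nonzero $\ell\in\operatorname{Hom}_{G_n(\R)}(\pi\widehat{\otimes}\operatorname{Sym}^m(\mathbb C^n),\lvert\det\rvert_\mathbb R^{-s_0})$ and compose it with the jet map $J_m$ of \eqref{eq:jet-map}: set
\[
T_m(W,\Phi)=\ell\bigl(W\otimes J_m\Phi\bigr),\qquad W\in\mathcal W(\pi,\psi),\ \Phi\in\mathcal S,
\]
using the fixed identification $\pi\simeq\mathcal W(\pi,\psi)$. More precisely, $T_m$ is the continuous linear functional $\ell\circ(\mathrm{id}_\pi\widehat{\otimes}J_m)$ on $\pi\widehat{\otimes}\mathcal S$, where $\mathrm{id}_\pi\widehat{\otimes}J_m:\pi\widehat{\otimes}\mathcal S\to\pi\widehat{\otimes}\operatorname{Sym}^m(\mathbb C^n)$ is the continuous map induced by the continuous map $J_m$ on the completed projective tensor product (functoriality of $\widehat{\otimes}$). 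Since $\operatorname{Sym}^m(\mathbb C^n)$ is finite-dimensional, $\pi\widehat{\otimes}\operatorname{Sym}^m(\mathbb C^n)$ is just $\pi\otimes\operatorname{Sym}^m(\mathbb C^n)$ with the product topology, so no subtlety arises on the target side.

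Next I would check that $T_m\in\mathcal H_{s_0}(\pi)$. The map $\mathrm{id}_\pi\widehat{\otimes}J_m$ is $G_n(\R)$-equivariant for the diagonal action because $J_m$ is equivariant (chain rule, as noted before \eqref{eq:jet-properties}); on pure tensors $(g\cdot W)\otimes J_m(g\cdot\Phi)=g\cdot(W\otimes J_m\Phi)$, and density of pure tensors together with continuity extends this. Composing with $\ell$ gives $T_m(g\cdot W,g\cdot\Phi)=\lvert\det g\rvert_\mathbb R^{-s_0}T_m(W,\Phi)$.

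The vanishing and nonvanishing then follow from \eqref{eq:jet-properties}. If $\Phi\in\mathcal S^{m+1}$ then $J_m\Phi=0$, so $T_m(W,\Phi)=0$. For nonvanishing, $\ell\neq0$ and pure tensors span a dense subspace of $\pi\otimes\operatorname{Sym}^m(\mathbb C^n)$ (indeed all of it, since the second factor is finite-dimensional), so there exist $W$ and $P$ with $\ell(W\otimes P)\neq0$. By the surjectivity of $J_m|_{\mathcal S^m}$ in \eqref{eq:jet-properties}, choose $\Phi\in\mathcal S^m$ with $J_m\Phi=P$; then $T_m(W,\Phi)\neq0$. The only point needing any care is the continuity and functoriality of $\widehat{\otimes}$ used to make $T_m$ a genuine element of $\mathcal H_{s_0}(\pi)$; it is standard, so I expect no real obstacle.
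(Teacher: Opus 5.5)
Your proposal is correct and follows essentially the same route as the paper: compose a nonzero $\ell_m$ with the degree-$m$ Taylor map $J_m$, then use the equivariance and continuity of $J_m$ together with \eqref{eq:jet-properties}. Your explicit handling of $\mathrm{id}_\pi\widehat{\otimes}J_m$ and your use of the finite-dimensionality of $\operatorname{Sym}^m(\mathbb C^n)$ for the nonvanishing step supply details that the paper leaves implicit.
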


\begin{proof}
Choose
\[
0\neq\ell_m\in
\operatorname{Hom}_{G_n(\R)}
\left(
\pi\widehat{\otimes}\operatorname{Sym}^m(\mathbb C^n),
\lvert\det\rvert_\mathbb R^{-s_0}
\right),
\]
and, using the fixed identification of $\pi$ with its Whittaker
model, define
\begin{equation}\label{eq:Tm-def}
T_m(W,\Phi)=\ell_m\bigl(W\otimes J_m\Phi\bigr).
\end{equation}
The continuity and equivariance of $J_m$ show that
$T_m\in\mathcal H_{s_0}(\pi)$. By
\eqref{eq:jet-properties}, $J_m$ vanishes on $\mathcal S^{m+1}$,
so $T_m$ vanishes on
$\mathcal W(\pi,\psi)\times\mathcal S^{m+1}$. Since
$J_m|_{\mathcal S^m}$ is surjective and $\ell_m\neq0$, the
restriction of $T_m$ to
$\mathcal W(\pi,\psi)\times\mathcal S^m$ is nonzero.
\end{proof}

\begin{proof}[Proof of Theorem~\ref{1.1}]
The implication $(i)\Rightarrow(ii)$ was proved at the end of
Section~\ref{s2}. Assume $(ii)$. By Proposition~\ref{pole},
$L(s,\pi,\operatorname{As})$ has a pole at $s_0$; let $d\geq1$
be its order. By the discussion in Section~\ref{s1}, $d$ is
also the maximal pole order of $\mathcal J(\pi)$ at $s_0$.

By Lemma~\ref{lem:jet-form}, there exists a nonzero
$T_m\in\mathcal H_{s_0}(\pi)$ that vanishes on
$\mathcal W(\pi,\psi)\times\mathcal S^{m+1}$ but not on
$\mathcal W(\pi,\psi)\times\mathcal S^m$.
Since $\Lambda_{s_0}$ is also a nonzero element of
$\mathcal H_{s_0}(\pi)$, Proposition~\ref{prop:all-s-uniqueness}
gives
\begin{equation}\label{eq:Lambda-is-jet}
\Lambda_{s_0}=cT_m
\qquad\text{for some }c\in\mathbb C^\times.
\end{equation}

Put
\[
a_{s_0}
=
\lim_{s\to s_0}(s-s_0)^dL(s,\pi,\operatorname{As})
\neq0.
\]
By \eqref{eq:Lambda-def} and \eqref{eq:Lambda-is-jet},
\[
\begin{aligned}
B_{s_0}(W,\Phi)
&=\lim_{s\to s_0}(s-s_0)^dI(s,W,\Phi)\\
&=a_{s_0}\Lambda_{s_0}(W,\Phi)
=c\,a_{s_0}T_m(W,\Phi).
\end{aligned}
\]
Since $c\,a_{s_0}\neq0$, the form $B_{s_0}$ has the same
vanishing properties as $T_m$. Thus $s_0$ is an exceptional
pole of level $m$ for $\mathcal J(\pi)$.
This proves $(ii)\Rightarrow(i)$ and completes the proof.
\end{proof}

\end{section}

\begin{section}{Archimedean Derivatives and Exceptional L-factors}\label{s5}

In this section, we prove Theorem~\ref{1.2}. First, we record
the following characterization of exceptional poles.

\begin{thm}\label{classification-general}
Let
\[
\pi=
\chi_{m_1,s_1}\times\cdots\times\chi_{m_n,s_n}
\]
be in general position. Let
\(s_0\in\mathbb C\) and \(m\in\mathbb Z_{\geq0}\).
The following conditions are equivalent:
\begin{enumerate}[label=$(\roman*)$]
    \item
    $s_0$ is an exceptional pole of level $m$ for $\pi$.

    \item
    There exist integers
    \[
    c_1,\ldots,c_n\geq0,
    \qquad
    \sum_{i=1}^{n}c_i=m,
    \]
    and an involution $w\in S_n$ such that
    \begin{equation}\label{eq:asai-classification}
    \begin{aligned}
    m_{w(i)}+c_{w(i)}
    &=m_i+c_i,
    &&1\leq i\leq n,\\
    s_{w(i)}+s_i+\frac{c_{w(i)}+c_i}{2}+s_0
    &=0,
    &&1\leq i\leq n,
    \end{aligned}
    \end{equation}
    and
    \begin{equation}\label{eq:asai-classification-fixed}
    m_i+c_i\equiv0\pmod2
    \qquad\text{whenever }w(i)=i.
    \end{equation}

    \item
    $\operatorname{Hom}_{\operatorname{GL}_n(\R)}
    \left(
    \pi\widehat{\otimes}\operatorname{Sym}^m(\C^n),
    \lvert\det\rvert_\mathbb R^{-s_0}
    \right)\neq0$.
\end{enumerate}
\end{thm}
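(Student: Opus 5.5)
This is a direct assembly of results already proved, so I plan to chain them rather than argue anything new. Theorem~\ref{1.1} applies to every irreducible generic $\pi$, and $\pi$ in general position is irreducible and generic (Section~\ref{s2}). It therefore gives $(i)\Leftrightarrow(iii)$ at once. Proposition~\ref{principal-cong-inverse} is stated exactly for $\pi$ in general position. It asserts that the Hom space in $(iii)$ is nonzero if and only if there are $c_1,\ldots,c_n\ge0$ with $\sum c_i=m$ and an involution $w$ satisfying \eqref{eq:asai-congruence-inverse} and \eqref{eq:asai-fixed-parity}. These conditions are verbatim \eqref{eq:asai-classification} and \eqref{eq:asai-classification-fixed}, so this yields $(ii)\Leftrightarrow(iii)$.

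I would order the write-up as follows. First I prove $(ii)\Leftrightarrow(iii)$ by quoting Proposition~\ref{principal-cong-inverse}, after checking that the indexing and normalizations agree: the parameters $m_i,s_i$, the character $|\det|_\mathbb R^{-s_0}$, and $\operatorname{Sym}^m(\C^n)$ with the row action. Then I prove $(i)\Leftrightarrow(iii)$ by invoking Theorem~\ref{1.1}, whose statement uses $\operatorname{GL}_n(\R)=G_n(\R)$ and the same Hom space.

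No real obstacle is expected. The one point to watch is that the realization of $\pi$ as a principal series in general position really is the irreducible generic representation to which Theorem~\ref{1.1} applies. This follows from \cite{CC1999} and is already recorded in Section~\ref{s2}.

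As a sanity remark, I might add that Proposition~\ref{pole} then shows that any such $s_0$ is a pole of order $\lceil n/2\rceil$. This is not needed for the equivalence itself.
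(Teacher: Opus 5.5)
Your proposal is correct and matches the paper's proof exactly: $(ii)\Leftrightarrow(iii)$ is Proposition~\ref{principal-cong-inverse}, and $(i)\Leftrightarrow(iii)$ is Theorem~\ref{1.1}. There is no circularity, since Theorem~\ref{1.1} is proved in Section~\ref{s4} without using this classification.
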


\begin{proof}
The equivalence of\/ $(ii)$ and $(iii)$ is
Proposition~\ref{principal-cong-inverse}. The equivalence of\/ $(i)$ and $(iii)$ is
Theorem~\ref{1.1}.
\end{proof}

\begin{corollary}\label{cor:exceptional-pole-order}
Let $\pi$ be as in Theorem~\ref{classification-general}.
Every exceptional pole of $\pi$ has order
$\left\lceil n/2\right\rceil$.
\end{corollary}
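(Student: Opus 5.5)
Let $s_0$ be an exceptional pole of some level $m\ge0$ for $\pi$. By definition, $s_0$ is then a pole of the family $\mathcal J(\pi)$, and its order is the maximal order $d$ of poles of Flicker integrals at $s_0$. As recalled in Section~\ref{s1}, $\Lambda_{s_0}\neq0$ for every $s_0$, so $d$ equals the order of the pole of $L(s,\pi,\mathrm{As})$ at $s_0$. The claim is therefore that this order is $\lceil n/2\rceil$.

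First, Theorem~\ref{classification-general}, applied to $(i)\Rightarrow(iii)$, shows that
\[
\operatorname{Hom}_{G_n(\R)}\left(\pi\widehat\otimes\Sym^m(\C^n),\lvert\det\rvert_\mathbb R^{-s_0}\right)\neq0.
\]
Applying $(iii)\Rightarrow(ii)$ then yields $\mathbf c=(c_1,\ldots,c_n)\in\mathcal C_m$ and an involution $w$ satisfying \eqref{eq:asai-classification} and \eqref{eq:asai-classification-fixed}. These are exactly the data required in Proposition~\ref{principal-cong-inverse}.

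Second, we invoke the general-position part of Proposition~\ref{pole} with this $\mathbf c$ and $w$. It gives the order of the pole of $L(s,\pi,\mathrm{As})$ at $s_0$ as $D_{\mathrm{gp}}=\lceil n/2\rceil$ by \eqref{eq:asai-pole-order-gp}. That proposition assumes $m_1\ge\cdots\ge m_n$, which it arranges by reordering the inducing characters. Reordering does not affect the general-position hypothesis, and it changes \eqref{eq:asai-classification} only by relabeling $c_i$ and conjugating $w$. So we may assume this ordering from the outset.

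The only point requiring care is the identification of the order of the exceptional pole (the maximal order $d$ in $\mathcal J(\pi)$) with the order of the pole of the $L$-function. This is exactly the nonvanishing of $\Lambda_{s_0}$ from \cite[Theorem~6.1]{JY2026}, already recorded in Section~\ref{s1} and used in the proof of Theorem~\ref{1.1}. With that in hand, the corollary follows immediately.
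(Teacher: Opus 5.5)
Your proof is correct and follows the same route as the paper, whose one-line proof cites Theorem~\ref{1.1} (the easy direction $(i)\Rightarrow(ii)$) together with the general-position part of Proposition~\ref{pole}. Your passage through $(iii)\Rightarrow(ii)$ of Theorem~\ref{classification-general} makes explicit where the data $(\mathbf c,w)$ required by Proposition~\ref{pole} come from. Your remarks on reordering the inducing characters, and on identifying the exceptional-pole order with $\operatorname{pord}_{s_0}L(s,\pi,\mathrm{As})$ via $\Lambda_{s_0}\neq0$, are both sound.
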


\begin{proof}
This follows from Theorem~\ref{1.1} and
Proposition~\ref{pole}.
\end{proof}

For the remainder of this section, let $\pi$ be as in
Theorem~\ref{classification-general}, and put
\[
P_{\mathrm{ex}}
=
\{z\in\mathbb C:\text{$z$ is an exceptional pole of $\pi$
for some level $m\geq0$}\}.
\]
For a meromorphic function $F$, write $\operatorname{pord}_z(F)$
for its pole order at $z$, with value $0$ when $F$ is holomorphic
at $z$. Define
\[
d_{\mathrm{ex}}(z;\pi)=
\begin{cases}
\left\lceil n/2\right\rceil,&z\in P_{\mathrm{ex}},\\
0,&z\notin P_{\mathrm{ex}}.
\end{cases}
\]
By Corollary~\ref{cor:exceptional-pole-order}, this agrees with
the pole order of $L(s,\pi,\operatorname{As})$ at every
exceptional pole. For a representation of $G_r(\mathbb C)$
in general position, we use the same definitions with $n$
replaced by $r$.

\begin{prop}\label{exceptional-set}
There exist $r\geq0$ and $z_1,\ldots,z_r\in\mathbb C$, pairwise
incongruent modulo $2\mathbb Z$, such that
\[
P_{\mathrm{ex}}
=
\bigsqcup_{j=1}^{r}(z_j-2\mathbb Z_{\geq0}).
\]
\end{prop}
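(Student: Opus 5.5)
By Theorem~\ref{classification-general}, $z\in P_{\mathrm{ex}}$ if and only if there exist $m\ge0$, $\mathbf c\in\mathcal C_m$ and an involution $w\in S_n$ satisfying \eqref{eq:asai-classification} and \eqref{eq:asai-classification-fixed} with $s_0=z$. The plan is to fix $w$, describe the resulting set of $z$ explicitly, and then take the union over the finitely many involutions.

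Fix $w$. For a two-element orbit $\{i,j\}$ of $w$, the first condition says $c_i-c_j=m_j-m_i$. Hence $(c_i,c_j)=(c^0_i+t,\,c^0_j+t)$, where $(c^0_i,c^0_j)$ is the minimal nonnegative solution (one of the two entries is zero) and $t\in\mathbb Z_{\ge0}$. The second condition then reads $z=-s_i-s_j-\tfrac{c^0_i+c^0_j}{2}-t$. For a fixed point $i$, the conditions read $c_i\equiv m_i\pmod 2$ and $z=-2s_i-c_i$, so $c_i=\varepsilon_i+2t$ and $z=-2s_i-\varepsilon_i-2t$.

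Each orbit $O$ of $w$ therefore yields a set $z_O-\mathbb Z_{\ge0}$ (if $|O|=2$) or $z_O-2\mathbb Z_{\ge0}$ (if $|O|=1$). The set of $z$ admissible for $w$ is the intersection over the orbits, since $m$ is unconstrained: $m=\sum c_i$ is simply determined by the choices. An intersection of such arithmetic progressions is empty unless the $z_O$ agree modulo $\mathbb Z$. If it is nonempty, it is again a progression $z_w-\mathbb Z_{\ge0}$ or $z_w-2\mathbb Z_{\ge0}$: its starting point is the minimum of the $z_O$ in the real-part order, and its step is $2$ exactly when $w$ has a fixed point.

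A full-step progression $z-\mathbb Z_{\ge0}$ is the disjoint union of $z-2\mathbb Z_{\ge0}$ and $(z-1)-2\mathbb Z_{\ge0}$. So $P_{\mathrm{ex}}$ is a finite union of sets of the form $y-2\mathbb Z_{\ge0}$. For points $y,y'$ in the same class modulo $2\mathbb Z$, the union of $y-2\mathbb Z_{\ge0}$ and $y'-2\mathbb Z_{\ge0}$ is the one headed by whichever of $y,y'$ has the larger real part. Merging all progressions within each class modulo $2\mathbb Z$ therefore leaves one head $z_j$ per class, and the union becomes disjoint, as claimed.

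The main care point is the intersection step. One must check that the intersection of two progressions with a common residue is again a progression with the right step, in particular when a step-$1$ set is intersected with a step-$2$ set. This works because all $z_O$ lie in a single coset of $\mathbb Z$, so everything reduces to intersecting sets of the form $a-\mathbb Z_{\ge0}$ and $b-2\mathbb Z_{\ge0}$ with $a-b\in\mathbb Z$, which is elementary. General position is not needed here beyond Theorem~\ref{classification-general} itself.
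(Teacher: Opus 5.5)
Your proof is correct, but it takes a different route from the paper's.

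\textbf{The paper's route.} The paper never solves the conditions orbit by orbit. It makes two observations:
\begin{itemize}
\item Replacing $(c_1,\ldots,c_n)$ by $(c_1+2,\ldots,c_n+2)$ with the same involution $w$ turns a solution at $z$ into a solution at $z-2$. Hence $P_{\mathrm{ex}}$ is stable under $z\mapsto z-2$.
\item Every exceptional pole is a pole of $L(s,\pi,\mathrm{As})$. By the product formula and $\Gamma_{\mathbb C}(s)=\Gamma_{\mathbb R}(s)\Gamma_{\mathbb R}(s+1)$, these poles lie in finitely many progressions $b-2\mathbb Z_{\geq0}$. So $P_{\mathrm{ex}}$ meets only finitely many classes modulo $2\mathbb Z$, and its real parts are bounded above within each class.
\end{itemize}
Choosing the point of maximal real part in each class then finishes the proof in a few lines.

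\textbf{Your route, compared.} You stay entirely inside the combinatorial criterion of Theorem~\ref{classification-general} and use no analytic information about the $L$-function. The cost is the orbit-by-orbit solution and the intersection bookkeeping. The gain is an explicit description: $P_{\mathrm{ex}}$ is a union, over admissible involutions $w$, of progressions whose heads come from the quantities $-2s_i-\varepsilon_i$ and $-s_i-s_j-|m_i-m_j|/2$. The paper's argument does not give this. Your parameter $t$ also shows directly the stability under $z\mapsto z-2$ that the paper states.

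\textbf{One small inaccuracy.} When $w$ has both a fixed point and a two-element orbit, the head of the intersection need not be the minimum of the $z_O$ in real part. For instance,
\[
(0-\mathbb Z_{\geq0})\cap(1-2\mathbb Z_{\geq0})=-1-2\mathbb Z_{\geq0}.
\]
Your later steps only use that each nonempty intersection has the shape $y-\mathbb Z_{\geq0}$ or $y-2\mathbb Z_{\geq0}$, so this does not affect the conclusion.
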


\begin{proof}
If $z$ satisfies Theorem~\ref{classification-general} with data
$(c_1,\ldots,c_n,w)$, then $z-2$ satisfies it with data
$(c_1+2,\ldots,c_n+2,w)$. Thus $z\in P_{\mathrm{ex}}$ implies
$z-2\in P_{\mathrm{ex}}$.

The product formula for $L(s,\pi,\operatorname{As})$, together
with $\Gamma_{\mathbb C}(s)=\Gamma_{\mathbb R}(s)
\Gamma_{\mathbb R}(s+1)$, shows that its poles lie in finitely
many progressions $b-2\mathbb Z_{\geq0}$. Thus $P_{\mathrm{ex}}$
is contained in finitely many such progressions. In each
congruence class that meets $P_{\mathrm{ex}}$, choose $z_j$
with maximal real part. The assertion follows from the
stability of $P_{\mathrm{ex}}$ under $z\mapsto z-2$.
\end{proof}

\begin{defn}\label{Lex}
With $z_1,\ldots,z_r$ as in Proposition~\ref{exceptional-set},
define the exceptional $L$-factor by
\[
L_{\mathrm{ex}}(s,\pi,\operatorname{As})
=
\prod_{j=1}^{r}
\Gamma_{\mathbb R}(s-z_j)^{\left\lceil n/2\right\rceil}.
\]
When $P_{\mathrm{ex}}=\varnothing$, the empty product is $1$.
\end{defn}

\begin{prop}\label{Lex-pole-orders}
For every $z\in\mathbb C$,
\[
\operatorname{pord}_z L_{\mathrm{ex}}(s,\pi,\operatorname{As})
=d_{\mathrm{ex}}(z;\pi).
\]
\end{prop}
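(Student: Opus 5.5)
Since $\Gamma_{\mathbb R}$ has simple poles exactly at $2\mathbb Z_{\le0}$ and no zeros, the pole order of the product in Definition~\ref{Lex} can be read off factor by factor. I will compute it at an arbitrary $z$ and compare with $d_{\mathrm{ex}}(z;\pi)$.

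First, for each $j$, the factor $\Gamma_{\mathbb R}(s-z_j)^{\lceil n/2\rceil}$ has a pole at $s=z$ exactly when $z-z_j\in 2\mathbb Z_{\le0}$, that is, when $z\in z_j-2\mathbb Z_{\ge0}$. In that case its order is exactly $\lceil n/2\rceil$; otherwise it is holomorphic and nonvanishing at $z$. Second, the $z_j$ are pairwise incongruent modulo $2\mathbb Z$ by Proposition~\ref{exceptional-set}. Hence the progressions $z_j-2\mathbb Z_{\ge0}$ are pairwise disjoint, and at most one factor is singular at any given $z$. Since no factor has a zero, there is no cancellation. Therefore $\operatorname{pord}_z L_{\mathrm{ex}}(s,\pi,\operatorname{As})$ equals $\lceil n/2\rceil$ if $z\in\bigsqcup_j(z_j-2\mathbb Z_{\ge0})$ and equals $0$ otherwise.

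By Proposition~\ref{exceptional-set}, this disjoint union is exactly $P_{\mathrm{ex}}$, so the pole order is $\lceil n/2\rceil$ on $P_{\mathrm{ex}}$ and $0$ off it. This is the definition of $d_{\mathrm{ex}}(z;\pi)$. The degenerate case $P_{\mathrm{ex}}=\varnothing$ gives $L_{\mathrm{ex}}=1$ and $d_{\mathrm{ex}}\equiv0$, which agree trivially.

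There is no serious obstacle. The only point needing care is the disjointness of the progressions, which rests on the incongruence modulo $2\mathbb Z$ guaranteed by Proposition~\ref{exceptional-set}, together with the absence of zeros of $\Gamma$.
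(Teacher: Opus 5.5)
Your proof is correct and follows essentially the same route as the paper: each factor $\Gamma_{\mathbb R}(s-z_j)^{\lceil n/2\rceil}$ has poles of order exactly $\lceil n/2\rceil$ on $z_j-2\mathbb Z_{\geq0}$, these progressions are pairwise disjoint with union $P_{\mathrm{ex}}$ by Proposition~\ref{exceptional-set}, and the absence of zeros of $\Gamma$ rules out cancellation. You spell out the empty case and the final comparison with $d_{\mathrm{ex}}(z;\pi)$ more explicitly than the paper's two-line proof does.
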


\begin{proof}
Each $\Gamma_{\mathbb R}(s-z_j)$ has simple poles precisely at
$z_j-2\mathbb Z_{\geq0}$. These sets are pairwise disjoint,
and the Gamma function has no zeros.
\end{proof}

We next define the least common multiple of finite products of
inverse $\Gamma_{\mathbb R}$-factors. Throughout, such products
are understood to have scalar prefactor $1$. 

Consider a finite set $\mathcal{F}=\{F_1(s),\ldots,F_r(s)\}$, where
$$F_i(s) = \prod_j \Gamma_{\mathbb R}(s-a_{ij})^{-m_{ij}}$$
for some constants $a_{ij}\in\mathbb C$ and integers $m_{ij}\in\mathbb Z_{\geq0}$. Because $\Gamma_{\mathbb R}(s)^{-1}$ is an entire function, each $F_i$ is also entire. For $z\in\mathbb C$, let $\operatorname{ord}_z(F_i)$ denote the order of vanishing of $F_i$ at $s=z$. We then define
$$M_{\mathcal F}(z) = \max_{1\leq i\leq r} \operatorname{ord}_z(F_i) \quad \text{and} \quad c_{\mathcal F}(z) = M_{\mathcal F}(z)-M_{\mathcal F}(z+2).$$

\begin{lemma}\label{lem:lcm-jumps}
For every $z\in\mathbb C$, we have $c_{\mathcal F}(z)\in\mathbb Z_{\geq0}$. Moreover, $c_{\mathcal F}(z)\neq0$ for only finitely many $z\in\mathbb C$.
\end{lemma}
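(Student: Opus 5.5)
The plan is to compute $\operatorname{ord}_z(F_i)$ explicitly and observe that it is monotone along the progression $z+2\mathbb Z$. Since $\Gamma_{\mathbb R}(s)^{-1}=\pi^{s/2}\Gamma(s/2)^{-1}$ has simple zeros exactly at $2\mathbb Z_{\le 0}$ and no other zeros, each factor $\Gamma_{\mathbb R}(s-a_{ij})^{-m_{ij}}$ vanishes at $s=z$ to order $m_{ij}$ if $z\in a_{ij}-2\mathbb Z_{\ge0}$, and to order $0$ otherwise. Summing over $j$ gives
\[
\operatorname{ord}_z(F_i)=\sum_{j:\ z\in a_{ij}-2\mathbb Z_{\geq0}} m_{ij}.
\]
If $z+2\in a_{ij}-2\mathbb Z_{\ge0}$, then $z\in a_{ij}-2\mathbb Z_{\ge0}$ as well. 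So the index set for $z+2$ is contained in the index set for $z$. Because all $m_{ij}\ge0$, we get $\operatorname{ord}_z(F_i)\ge\operatorname{ord}_{z+2}(F_i)$ for every $i$.

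Taking the maximum over $i$ preserves this inequality, so $M_{\mathcal F}(z)\ge M_{\mathcal F}(z+2)$. Since both values are nonnegative integers, $c_{\mathcal F}(z)\in\mathbb Z_{\ge0}$. This settles the first claim.

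For finiteness, let $A=\{a_{ij}\}$, a finite set. If $c_{\mathcal F}(z)\neq0$, then $M_{\mathcal F}(z)>M_{\mathcal F}(z+2)$. Hence for some $i$, $\operatorname{ord}_z(F_i)>\operatorname{ord}_{z+2}(F_i)$. This means some index $j$ with $m_{ij}>0$ has $z\in a_{ij}-2\mathbb Z_{\ge0}$ but $z+2\notin a_{ij}-2\mathbb Z_{\ge0}$. That forces $z=a_{ij}$. Therefore $\{z:c_{\mathcal F}(z)\neq0\}\subseteq A$, which is finite.

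There is one subtle point. The inequality $\operatorname{ord}_z(F_i)>\operatorname{ord}_{z+2}(F_i)$ is deduced from $M_{\mathcal F}(z)>M_{\mathcal F}(z+2)$ by choosing $i$ attaining the maximum at $z$: for that $i$, $\operatorname{ord}_z(F_i)=M_{\mathcal F}(z)>M_{\mathcal F}(z+2)\ge\operatorname{ord}_{z+2}(F_i)$. The difference of the sums is then $\sum m_{ij}$ over those $j$ with $z=a_{ij}$, using the nesting of the index sets noted above. With this, no step should present a real obstacle.
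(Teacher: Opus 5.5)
Your proposal is correct and follows essentially the same route as the paper: you use the same explicit formula for $\operatorname{ord}_z(F_i)$, the same monotonicity along $z+2\mathbb Z$, and you bound the support of $c_{\mathcal F}$ by $\{a_{ij}\}$. The only difference is cosmetic. You argue the finiteness by contrapositive, choosing an $i$ that attains the maximum at $z$, whereas the paper shows directly that $z\notin\{a_{ij}\}$ forces $\operatorname{ord}_z(F_i)=\operatorname{ord}_{z+2}(F_i)$ for every $i$.
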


\begin{proof}
For every $i$ and $z\in\mathbb C$, the order of vanishing is
$$\operatorname{ord}_z(F_i) = \sum_{\substack{j\\ a_{ij}-z\in2\mathbb Z_{\geq0}}} m_{ij}.$$
Hence
$$\operatorname{ord}_z(F_i) - \operatorname{ord}_{z+2}(F_i) = \sum_{\substack{j\\a_{ij}=z}} m_{ij} \geq 0.$$
Since $\operatorname{ord}_z(F_i) \geq \operatorname{ord}_{z+2}(F_i)$ holds for all $i$, taking the maximum over $1 \leq i \leq r$ yields $M_{\mathcal F}(z)\geq M_{\mathcal F}(z+2)$, which proves that $c_{\mathcal F}(z)\in\mathbb Z_{\geq0}$.

Finally, if $z\notin\{a_{ij}\}_{i,j}$, then $\operatorname{ord}_z(F_i) = \operatorname{ord}_{z+2}(F_i)$ for all $i$. This implies $M_{\mathcal F}(z)=M_{\mathcal F}(z+2)$, and hence $c_{\mathcal F}(z)=0$. Thus, the support of $c_{\mathcal F}$ satisfies $\operatorname{Supp}(c_{\mathcal F}) \subseteq \{a_{ij}\}_{i,j}$, a finite set.
\end{proof}

\begin{defn}\label{lcm}
The \emph{least common multiple} of
$\mathcal F=\{F_1,\ldots,F_r\}$ is defined by
\[
\operatorname{l.c.m.}(F_1,\ldots,F_r)
=
\prod_{\substack{z\in\mathbb C\\c_{\mathcal F}(z)>0}}
\Gamma_{\mathbb R}(s-z)^{-c_{\mathcal F}(z)}.
\]
By Lemma~\ref{lem:lcm-jumps}, this is a finite product.
\end{defn}

\begin{prop}[Characterization of the l.c.m.;
  {\cite[Proposition~5.10]{JNY2026}}]
\label{prop:lcm-characterization}
Let
\[
\mathcal F=\{F_1,\ldots,F_r\}
\]
be as above, and put
\[
H(s)=\operatorname{l.c.m.}(F_1,\ldots,F_r).
\]
Then, for every $w\in\mathbb C$,
\[
\operatorname{ord}_w(H)
=
\max_{1\leq i\leq r}\operatorname{ord}_w(F_i).
\]
In particular, $H/F_i$ is entire for every $i$. Moreover, if $K$ is
another finite product of inverse $\Gamma_{\mathbb R}$-factors such
that $K/F_i$ is entire for every $i$, then $K/H$ is entire. Finally, $H$ is the unique finite product of inverse
$\Gamma_{\mathbb R}$-factors satisfying the displayed equality of
orders.
\end{prop}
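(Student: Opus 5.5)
The whole argument runs through the vanishing orders. For a finite product of inverse $\Gamma_{\mathbb R}$-factors $K=\prod_j\Gamma_{\mathbb R}(s-b_j)^{-n_j}$, the computation in Lemma~\ref{lem:lcm-jumps} gives $\operatorname{ord}_w(K)=\sum_{b_j-w\in2\mathbb Z_{\ge0}}n_j$. Two consequences follow. First, $\operatorname{ord}_w(K)=0$ for $\operatorname{Re}(w)$ large. Second, for each fixed $w$ and every $t\geq0$,
\[
\operatorname{ord}_w(K)-\operatorname{ord}_{w+2t+2}(K)=\sum_{u=0}^{t}\bigl(\operatorname{ord}_{w+2u}(K)-\operatorname{ord}_{w+2u+2}(K)\bigr).
\]
Applying this telescoping identity to $H$, each difference equals $c_{\mathcal F}(w+2u)$, because $\operatorname{ord}_z\Gamma_{\mathbb R}(s-z')^{-1}-\operatorname{ord}_{z+2}\Gamma_{\mathbb R}(s-z')^{-1}=\delta_{z,z'}$. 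The same identity applied to $M_{\mathcal F}$ gives $M_{\mathcal F}(w)-M_{\mathcal F}(w+2t+2)$. Now take $t$ large. Then $M_{\mathcal F}(w+2t+2)=0$, since only finitely many $a_{ij}$ occur, and likewise $\operatorname{ord}_{w+2t+2}(H)=0$. This yields $\operatorname{ord}_w(H)=M_{\mathcal F}(w)$, which is the displayed equality.

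Next I derive the divisibility statements. Since the $F_i$ and $H$ are entire, and $\Gamma_{\mathbb R}^{-1}$ vanishes only at its listed zeros, $H/F_i$ is meromorphic with poles only where $\operatorname{ord}_w(F_i)>\operatorname{ord}_w(H)$. The equality just proved excludes this, so $H/F_i$ is entire.

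For minimality, suppose $K/F_i$ is entire for every $i$. Then $\operatorname{ord}_w(K)\ge\operatorname{ord}_w(F_i)$ for all $i$ and all $w$, hence $\operatorname{ord}_w(K)\ge M_{\mathcal F}(w)=\operatorname{ord}_w(H)$. Therefore $K/H$ has no poles. The only care needed is that $H$ has no zeros other than those counted by $\operatorname{ord}$, which is clear because $\Gamma_{\mathbb R}^{-1}$ vanishes exactly at $z'-2\mathbb Z_{\geq0}$.

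For uniqueness, suppose $K=\prod_{z}\Gamma_{\mathbb R}(s-z)^{-e(z)}$, with finitely supported $e\ge0$, satisfies $\operatorname{ord}_w(K)=M_{\mathcal F}(w)$ for all $w$. The jump formula then gives $e(w)=\operatorname{ord}_w(K)-\operatorname{ord}_{w+2}(K)=c_{\mathcal F}(w)$. So the exponents coincide and $K=H$; this uses the prefactor normalization, which fixes scalar multiples.

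The main point requiring care is the telescoping step, specifically checking that the sums terminate. This is exactly where finiteness of the support of $c_{\mathcal F}$ (Lemma~\ref{lem:lcm-jumps}) and the eventual vanishing of the orders for $\operatorname{Re}(w)\gg0$ are used. Everything else is bookkeeping.
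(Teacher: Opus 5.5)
Your argument is correct and complete. The paper does not prove this proposition itself; it imports it from \cite[Proposition~5.10]{JNY2026}, so there is no in-paper proof to compare against. What you give is the natural self-contained proof, built on the same order formula as the proof of Lemma~\ref{lem:lcm-jumps}.

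Everything rests on the jump identity $\operatorname{ord}_z(K)-\operatorname{ord}_{z+2}(K)=e(z)$ for $K=\prod_z\Gamma_{\mathbb R}(s-z)^{-e(z)}$. It drives two steps:
\begin{enumerate}
\item The telescoping proof of $\operatorname{ord}_w(H)=M_{\mathcal F}(w)$. At points with $c_{\mathcal F}=0$, which are omitted from the product defining $H$, you are implicitly using $c_{\mathcal F}\geq0$ from that lemma.
\item The uniqueness statement. Because you recover the exponents from the order function, you get $K=H$ outright, not merely that $K/H$ is a zero-free entire function.
\end{enumerate}
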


We use the complex analogue of the archimedean derivative in
\cite[Section~3]{Chai2015}. For a smooth representation $(\pi,V)$,
a connected unipotent subgroup $U$, and a smooth character $\theta$
of $U$, write
\[
\pi_{U,\theta}
=
V\big/\overline{\operatorname{span}
\{\pi(u)v-\theta(u)v:u\in U,\ v\in V\}},
\qquad
\pi_U=\pi_{U,1}.
\]
Equivalently, since $U$ is connected, one may quotient by
the closed span of the Lie algebra relations
$d\pi(X)v-d\theta(X)v$, where
$X\in\operatorname{Lie}(U)$ and $v\in V$.

For $1\leq l\leq n$, put
\[
U_{n-l+1}
=
\left\{
\begin{pmatrix}I_{n-l}&x\\0&u\end{pmatrix}
:x\in\operatorname{Mat}_{n-l,l}(\mathbb C),\
u\in N_l(\mathbb C)
\right\},
\]
and define
\[
\theta_l\!\left(
\begin{pmatrix}I_{n-l}&x\\0&u\end{pmatrix}
\right)=\psi_l(u).
\]
Thus $U_1=N_n(\mathbb C)$. We regard $G_0(\mathbb C)$ as the
trivial group.

\begin{defn}[Archimedean derivative]
\label{def:archimedean-derivative}
For $\pi\in\Irr(G_n(\mathbb C))$, put $\pi^{(0)}=\pi$ and,
for $1\leq l\leq n$, define
\[
\pi^{(l)}
=
|\det|_{\mathbb C}^{-l/2}\otimes
\pi_{U_{n-l+1},\theta_l},
\]
where $G_{n-l}(\mathbb C)$ acts through
$g\mapsto\operatorname{diag}(g,I_l)$. This action is well defined
because $G_{n-l}(\mathbb C)$ normalizes $U_{n-l+1}$ and preserves
$\theta_l$.
\end{defn}

\begin{prop}[Explicit derivatives in general position]
\label{explicit-derivative}
Let $\pi=\chi_{m_1,s_1}\times\cdots\times\chi_{m_n,s_n}$
be in general position. For $I\subseteq\{1,\ldots,n\}$, put
\[
\pi_I=\mathop{\times}_{i\in I}\chi_{m_i,s_i},
\]
with the indices in increasing order, and put
$\pi_\varnothing=\mathbb C$.
For $0\leq q\leq n$, let
$$\mathscr I_q=\{I\subseteq\{1,\ldots,n\}:|I|=n-q\}.$$ Then
\[
\pi^{(q)}\simeq\bigoplus_{I\in\mathscr I_q}\pi_I.
\]
For $q<n$, the summands are in general
position and are pairwise nonisomorphic.
\end{prop}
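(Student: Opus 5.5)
The plan is to compute the twisted Jacquet module $\pi_{U_{n-q+1},\theta_q}$ in stages, using the analogue for $G_n(\mathbb C)$ of the Bernstein--Zelevinsky geometric lemma for principal series. This analogue is the archimedean Leibniz rule of Chai \cite[Section~3]{Chai2015}, carried over to complex groups. The argument goes by induction on $q$, via the factorization of the derivative functor as an iteration of first-step operations. Write $P_{n-1,1}$ for the mirabolic-type parabolic. First, the filtration of $\pi=\operatorname{Ind}_{B_n(\C)}^{G_n(\C)}(\sigma)$ by $P_{n-1,1}(\C)$-orbits on $B_n(\C)\backslash G_n(\C)$ is a Bruhat filtration indexed by which inducing character sits in the last position. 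Taking the coinvariants under the unipotent radical of the mirabolic, twisted by the appropriate character, kills all open-orbit contributions except the expected ones. The closed-orbit piece, after the normalization $|\det|_{\C}^{-1/2}$, contributes $\pi_{\{1,\dots,n\}\setminus\{i\}}$ for the index $i$ that is removed.

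Concretely, I would first treat $q=1$ and show that $\pi^{(1)}$ admits a finite filtration with graded pieces $\pi_{I}$, $|I|=n-1$. Each piece arises from the Schwartz-induced section supported on the $i$-th Bruhat cell. The twisted coinvariants of that cell's contribution are computed by Borel's lemma and the Taylor expansion transversal to the cell, in the style of du Cloux. The higher normal derivatives are twisted by algebraic characters, and in general position they generate nothing after taking $\theta$-coinvariants, because $\theta$ is nondegenerate on the relevant root group. Iterating, $\pi^{(q)}$ has a filtration whose graded pieces are the $\pi_I$ with $I\in\mathscr I_q$, each appearing once, since one records the ordered set of removed indices modulo reordering. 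The induced shuffles of the remaining characters give isomorphic representations by irreducibility \cite{SV1980}.

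Next, I would check that each $\pi_I$ is in general position; this is immediate, because the defining non-integrality conditions on $a_i,b_i$ and on their differences pass to subsets. Hence each $\pi_I$ is irreducible and generic. The summands are pairwise nonisomorphic for $q<n$. Indeed, $I\neq I'$ of equal size forces the multisets $\{a_i\}_{i\in I}$ and $\{a_i\}_{i\in I'}$ to differ, because general position makes the $a_i$ pairwise incongruent mod $\mathbb Z$. So the infinitesimal characters differ.

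Finally, I would split the filtration. By Lemma~\ref{lem:primary-decomposition}, applied to the finite-length $G_{n-q}(\C)$-representation $\pi^{(q)}$, the graded pieces have pairwise distinct infinitesimal characters, so the filtration splits canonically as $\bigoplus_I\pi^{(q)}[\zeta_I]\simeq\bigoplus_I\pi_I$. The case $q=n$ gives $\mathbb C$ once the conventions are adjusted.

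I expect the main obstacle to be the first step. Its difficulty is proving Hausdorffness and exactness of the twisted coinvariant functor on the Bruhat filtration, and showing that the transversal-jet contributions vanish; archimedean coinvariants are not exact in general. My first attempt would be to use the general-position hypothesis through infinitesimal characters: the jet pieces carry characters shifted by nonzero integers, so they lie in primary components different from every $\pi_I$. Combined with Casselman--Wallach finiteness of twisted Jacquet modules, this should force them to contribute zero.
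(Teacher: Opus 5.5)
The proposal has a genuine gap: the inductive step is wrong as stated, and the analytic core is not proved.

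First, $\pi^{(q)}$ is not obtained by iterating your $q=1$ computation. For $q\geq2$ the character $\theta_q$ is nontrivial on $U_n$, the unipotent radical of $P_{n-1,1}$, because it sees the $(n-1,n)$ entry. So $\pi_{U_{n-q+1},\theta_q}$ is not a quotient of the Jacquet module $\pi_{U_n}$ that your first step computes. If you iterate literally, you get the wrong multiplicities. Applying the statement to each summand, $(\pi^{(1)})^{(1)}\simeq\bigoplus_{|J|=n-1}\bigoplus_{I\subset J,\,|I|=n-2}\pi_I$ contains every $\pi_I$ twice, and $q$-fold iteration gives each $\pi_I$ with multiplicity $q!$. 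Recording removed indices ``modulo reordering'' does not remove this multiplicity. A Bernstein--Zelevinsky induction has to run through the twisted functor on mirabolic restrictions, $\Psi^-\circ(\Phi^-)^{q-1}$, with its Leibniz-rule bookkeeping. You neither formulate nor prove that over $\C$. Note also that $\theta_1$ is trivial. So in your $q=1$ step all $n$ cells of $P_{n-1,1}(\C)\backslash G_n(\C)/B_n(\C)$ contribute, and the ``nondegenerate $\theta$'' mechanism for killing jets is not available there.

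Second, the analytic core is left open, as you acknowledge. Coinvariants are only right exact. The pieces on non-closed cells are Schwartz inductions, not Casselman--Wallach representations, so Lemma~\ref{lem:primary-decomposition} does not apply to them. Your infinitesimal-character argument needs, a priori, that the relevant coinvariants are Hausdorff and of finite length, with $Z(\mathfrak g)$-spectrum constrained. That is essentially the content of the proposition.

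The paper avoids both problems. With $Q=P_{n-q,q}$, one has $U_{n-q+1}=U_Q\rtimes N_q(\C)$ and $\theta_q|_{U_Q}=1$, hence $\pi^{(q)}\simeq|\det|_{\C}^{-q/2}\otimes(\pi_{U_Q})_{N_q(\C),\psi_q}$. The single Jacquet module $\pi_{U_Q}$ is given in general position by \cite[Theorem~5.3]{CC1999}, passed to smooth coinvariants by \cite[Theorem~1.5]{WY2026}. It equals $\bigoplus_I(\pi_I\widehat\otimes\pi_{I^c})\otimes\delta_Q^{1/2}$, indexed by $S_n/(S_{n-q}\times S_q)$, that is, by unordered subsets, so multiplicity one is automatic. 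Then $(\pi_{I^c})_{N_q(\C),\psi_q}\simeq\C$ by genericity and uniqueness of Whittaker functionals, and $\delta_Q^{1/2}|_{G_{n-q}}=|\det|_{\C}^{q/2}$ cancels the normalization.

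A geometric proof should follow this shape: one parabolic Jacquet module for $Q$, then Whittaker coinvariants on the $G_q$-factor, rather than iterated first derivatives. Your general-position and non-isomorphism checks are fine. So is the splitting via primary decomposition, once a Casselman--Wallach filtration is available.
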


\begin{proof}
The case $q=0$ follows from the definition. For $q=n$, genericity
and uniqueness of Whittaker functionals give
$\pi^{(n)}=\pi_{N_n(\mathbb C),\psi_n}\simeq\mathbb C$.

Assume $1\leq q<n$, and put
\[
Q=P_{n-q,q}(\mathbb C)=L_QU_Q,
\qquad
L_Q=G_{n-q}(\mathbb C)\times G_q(\mathbb C).
\]
Since $U_{n-q+1}=U_Q\rtimes N_q(\mathbb C)$ and $\theta_q$
is trivial on $U_Q$, taking coinvariants in stages gives
\begin{equation}\label{eq:derivative-homology-realization}
\pi^{(q)}
\simeq
|\det|_{\mathbb C}^{-q/2}\otimes
(\pi_{U_Q})_{N_q(\mathbb C),\psi_q}.
\end{equation}

Our general-position hypothesis implies \cite[(5.1.1)]{CC1999}.
By \cite[Theorem~5.3]{CC1999}, the zeroth nilradical homology of the
Harish--Chandra module of $\pi$ is a direct sum indexed by
$S_n/(S_{n-q}\times S_q)$. A class is determined by the subset
$I\in\mathscr I_q$ of inducing characters placed in the first
Levi factor; the remaining characters form the second factor.
Passing to smooth coinvariants by \cite[Theorem~1.5]{WY2026},
we obtain
\begin{equation}\label{eq:CC-decomposition-explicit}
\pi_{U_Q}
\simeq
\bigoplus_{I\in\mathscr I_q}
(\pi_I\widehat\otimes\pi_{I^c})\otimes\delta_Q^{1/2},
\end{equation}
where $I^c=\{1,\ldots,n\}\setminus I$. The modulus twist arises because induction is normalized,
whereas the action on coinvariants is unnormalized. Explicitly,
\[
\delta_Q^{1/2}(\operatorname{diag}(g_1,g_2))
=
|\det g_1|_{\mathbb C}^{q/2}
|\det g_2|_{\mathbb C}^{-(n-q)/2}.
\]

Both $\pi_I$ and $\pi_{I^c}$ are in general position. Therefore
$(\pi_{I^c})_{N_q(\mathbb C),\psi_q}\simeq\mathbb C.$
Since determinant characters are trivial on $N_q(\mathbb C)$,
\eqref{eq:CC-decomposition-explicit} gives
\[
(\pi_{U_Q})_{N_q(\mathbb C),\psi_q}
\simeq
\bigoplus_{I\in\mathscr I_q}
\left(\pi_I\otimes|\det|_{\mathbb C}^{q/2}\right).
\]
Substituting into \eqref{eq:derivative-homology-realization},
we conclude that
\[
\begin{aligned}
\pi^{(q)}
&\simeq
|\det|_{\mathbb C}^{-q/2}\otimes
\bigoplus_{I\in\mathscr I_q}
\left(\pi_I\otimes|\det|_{\mathbb C}^{q/2}\right)\\
&\simeq
\bigoplus_{I\in\mathscr I_q}\pi_I.
\end{aligned}
\]

Finally, for distinct $I,J\in\mathscr I_q$, the multisets
$\{a_i:i\in I\}$ and $\{a_j:j\in J\}$ differ, since the $a_i$
are pairwise distinct. Thus $\pi_I$ and $\pi_J$ have different
infinitesimal characters and are nonisomorphic.
\end{proof}
We now prove the pole-order comparison needed for Theorem~\ref{1.2}
directly from the factorization of the Asai $L$-function.

\begin{thm}\label{order-characterization}
Let
\[
\pi=
\chi_{m_1,s_1}\times\cdots\times\chi_{m_n,s_n}
\]
be in general position.

\begin{enumerate}[label=$(\roman*)$]
    \item\label{order-characterization-1}
    Suppose that $s_0$ is a pole of
    $L(s,\pi,\operatorname{As})$ of order $d$. Then $d\leq\left\lceil n/2\right\rceil$, and there
    exist an integer $q$ with
    $d\leq q\leq n$ and an irreducible constituent
    \[
    \sigma\subseteq\pi^{(n-q)}
    \]
    such that $s_0$ is an exceptional pole for
    $\sigma$ and
    \[
    \operatorname{pord}_{s_0}
    L(s,\sigma,\operatorname{As})=d.
    \]

    \item\label{order-characterization-2}
    Conversely, let $1\leq q\leq n$, and let
    \[
    \sigma\subseteq\pi^{(n-q)}
    \]
    be an irreducible constituent. If $s_0$ is an exceptional pole
    for $\sigma$ of order $d$, then
    \[
    \operatorname{pord}_{s_0}
    L(s,\pi,\operatorname{As})\geq d.
    \]
\end{enumerate}
\end{thm}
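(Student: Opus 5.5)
The plan is to work entirely with the explicit product formula \eqref{eq:asai-L-principal-series}, and to match poles of $L(s,\pi,\operatorname{As})$ with the combinatorial data of Theorem~\ref{classification-general} applied to the summands $\pi_I$ of the derivatives described in Proposition~\ref{explicit-derivative}. As in the proof of Proposition~\ref{pole}, I order the inducing characters so that $m_1\geq\cdots\geq m_n$. Then the off-diagonal factor attached to $i<j$ is $\Gamma_{\mathbb C}(s+a_i+b_j)$. The diagonal factor $\Gamma_{\mathbb R}(s+2s_i+\varepsilon_i)$ has a pole at $s_0$ exactly when $s_0+2s_i+\varepsilon_i\in2\mathbb Z_{\leq0}$. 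Since Gamma functions have no zeros, $\operatorname{pord}_{s_0}L(s,\pi,\operatorname{As})$ is the number of factors singular at $s_0$.

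The first step, which I expect to be the main obstacle, is a matching lemma for these singular factors. Call $i$ a fixed index if its diagonal factor is singular at $s_0$, and call $\{i,j\}$ with $i<j$ a singular pair if its $\Gamma_{\mathbb C}$-factor is singular at $s_0$. I would show that every index lies in at most one of these singular objects, counting fixed indices and pairs together, and that there is at most one fixed index.

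\begin{itemize}
\item Since $a_j+b_i$ and $a_i+b_j$ differ by $m_j-m_i\in\mathbb Z$, a singular pair $\{i,j\}$ gives $s_0+a_i+b_j\in\mathbb Z$ and $s_0+a_j+b_i\in\mathbb Z$.
\item A fixed index $i$ gives $2s_i+m_i\in -s_0+\varepsilon_i-m_i+2\mathbb Z$, hence $a_i\in -s_0/2+\mathbb Z$. It also gives $s_0+a_i+b_i\in\mathbb Z$.
\item If $i$ lies in two singular objects, the resulting relations yield $b_j-b_k\in\mathbb Z$ or $a_j-a_k\in\mathbb Z$ for some $j\neq k$. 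This contradicts general position. The case of two fixed indices contradicts $a_i-a_k\notin\mathbb Z$.
\end{itemize}

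So the singular factors define an involution $w$ on a subset $I\subseteq\{1,\ldots,n\}$ with at most one fixed point. Then $d$ is the number of $w$-orbits in $I$, so $d=\lceil |I|/2\rceil\leq\lceil n/2\rceil$ and $d\leq q:=|I|$.

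For (i), I take $\sigma=\pi_I\subseteq\pi^{(n-q)}$. This is a constituent by Proposition~\ref{explicit-derivative}, and it is in general position when $q<n$; for $q=n$ we have $\sigma=\pi$. I then verify condition (ii) of Theorem~\ref{classification-general} for $\pi_I$ with the involution $w$.

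\begin{itemize}
\item For a pair $i<j=w(i)$, write $s_0+a_i+b_j=-k$ with $k\in\mathbb Z_{\geq0}$. Put $c_i=k$ and $c_j=k+m_i-m_j\geq0$. Then $m_i+c_i=m_j+c_j$ and $s_0+s_i+s_j+(c_i+c_j)/2=0$.
\item For the fixed index, write $s_0+2s_i+\varepsilon_i=-2k$ and put $c_i=2k+\varepsilon_i\geq0$. Then $s_0+2s_i+c_i=0$ and $m_i+c_i$ is even.
\end{itemize}

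Hence $s_0$ is an exceptional pole of $\sigma$ of level $\sum c_i$. The factors of $L(s,\pi_I,\operatorname{As})$ are exactly the factors of $L(s,\pi,\operatorname{As})$ indexed by $I$, with the same ordering. Therefore its singular factors at $s_0$ are exactly the $w$-orbits, and $\operatorname{pord}_{s_0}L(s,\sigma,\operatorname{As})=d$. This is consistent with Corollary~\ref{cor:exceptional-pole-order}.

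For (ii), I write $\sigma=\pi_J$ with $|J|=q$, using Proposition~\ref{explicit-derivative}; here $\sigma$ is in general position, or $\sigma=\pi$ when $q=n$. The order $d$ of the exceptional pole equals $\operatorname{pord}_{s_0}L(s,\pi_J,\operatorname{As})$. The product formula shows that $L(s,\pi_J,\operatorname{As})$ is a sub-product of the factors of $L(s,\pi,\operatorname{As})$: the diagonal factors for $i\in J$ and the pair factors for $i<j$ in $J$. Since the remaining Gamma factors have no zeros, they cannot cancel poles. Hence $\operatorname{pord}_{s_0}L(s,\pi,\operatorname{As})\geq d$.
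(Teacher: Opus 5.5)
Your proposal is correct and follows essentially the same route as the paper. You use general position to show that the singular Gamma factors at $s_0$ form disjoint singletons and pairs, with at most one singleton, and you take $\sigma=\pi_I$ for their union $I$. You then build the data $(c_i,w)$ exactly as the paper does, invoke Theorem~\ref{classification-general} in rank $q$, and handle (ii) by the sub-product observation.

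The differences are cosmetic:
\begin{itemize}
\item you order $m_1\geq\cdots\geq m_n$ globally, whereas the paper orders only within each pair;
\item you prove disjointness via $b$-differences, whereas the paper uses the uniform congruence $s_0+a_i+a_j\equiv0 \pmod{\mathbb Z}$;
\item you record the slightly sharper identity $d=\lceil q/2\rceil$.
\end{itemize}
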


\begin{proof}
Let $\varepsilon_i\in\{0,1\}$ satisfy
$\varepsilon_i\equiv m_i\pmod2$. By
\eqref{eq:asai-L-principal-series}, the factors having a pole at
$s_0$ are indexed by
\[
D(s_0)=\{i:s_0+2s_i+\varepsilon_i\in2\mathbb Z_{\leq0}\}
\]
and
\[
E(s_0)=\left\{\{i,j\}:i<j,\quad
s_0+s_i+s_j+\frac{|m_i-m_j|}{2}\in\mathbb Z_{\leq0}\right\}.
\]
The members of
$\{\{i\}:i\in D(s_0)\}\cup E(s_0)$ are pairwise disjoint. Indeed, modulo $\mathbb Z$ every pole condition is
$s_0+a_i+a_j\equiv0$, with $i=j$ in the diagonal case,
since $a_j-b_j=m_j\in\mathbb Z$. Two distinct conditions
sharing an index would give $a_i-a_j\in\mathbb Z$ for
some $i\neq j$, contrary to general position. Moreover,
$i\in D(s_0)$ implies
$a_i\in-s_0/2+\mathbb Z$, so $\#D(s_0)\leq1$.

We prove~\ref{order-characterization-1}. Since Gamma factors
have simple poles and no zeros,
\begin{equation}\label{eq:asai-pole-order-count}
d=\#D(s_0)+\#E(s_0).
\end{equation}
Put
\begin{equation}\label{eq:asai-q-definition}
q=\#D(s_0)+2\#E(s_0),
\qquad
I=D(s_0)\cup\bigcup_{\{i,j\}\in E(s_0)}\{i,j\}.
\end{equation}
Then $|I|=q$ and $d\leq q\leq n$. Also
$2d=q+\#D(s_0)\leq n+1$, giving
$d\leq\lceil n/2\rceil$. By
Proposition~\ref{explicit-derivative},
\begin{equation}\label{eq:asai-selected-constituent}
\sigma=\pi_I\subseteq\pi^{(n-q)}.
\end{equation}

Define an involution $w$ on $I$ by fixing the elements of
$D(s_0)$ and interchanging each pair in $E(s_0)$.
For $i\in D(s_0)$, put
\begin{equation}\label{eq:asai-diagonal-c}
c_i=-s_0-2s_i\in\mathbb Z_{\geq0}.
\end{equation}
The diagonal pole condition also gives
$m_i+c_i\equiv0\pmod2$. For $\{i,j\}\in E(s_0)$, order
the indices so that $m_i\geq m_j$ and set
\[
r_{ij}=-s_0-s_i-s_j-\frac{m_i-m_j}{2}
\in\mathbb Z_{\geq0},
\]
\begin{equation}\label{eq:asai-offdiagonal-c}
c_i=r_{ij},\qquad c_j=r_{ij}+m_i-m_j.
\end{equation}
Then $m_i+c_i=m_j+c_j$ and
$s_0+s_i+s_j+(c_i+c_j)/2=0$. Thus
Theorem~\ref{classification-general}, applied in rank $q$,
shows that $s_0$ is an exceptional pole of level
$\sum_{i\in I}c_i$ for $\sigma$. Its pole order is $d$, since
the pole factors in $L(s,\sigma,\operatorname{As})$ are exactly
those indexed by $D(s_0)$ and $E(s_0)$.

We proceed to~\ref{order-characterization-2}. By
Proposition~\ref{explicit-derivative}, every constituent
$\sigma\subseteq\pi^{(n-q)}$ is induced from a subset of
$q$ of the characters defining $\pi$. The product formula gives
\[
L(s,\pi,\operatorname{As})
=L(s,\sigma,\operatorname{As})R(s),
\]
where $R$ is a product of Gamma factors. Since $R$ has no zeros,
the pole order for $\pi$ is at least that for $\sigma$.
\end{proof}

For $0\leq q<n$, write
\[
\pi^{(q)}
=
\bigoplus_i\pi_i^{(q)},
\]
where the direct sum is the indexed decomposition of
Proposition~\ref{explicit-derivative}.

\begin{corollary}\label{max-exceptional-order}
For every $s_0\in\mathbb C$,
\[
\operatorname{pord}_{s_0}
L(s,\pi,\operatorname{As})
=
\max_{\substack{0\leq q<n\\ i}}
d_{\mathrm{ex}}
\left(
s_0;
\pi_i^{(q)}
\right).
\]
\end{corollary}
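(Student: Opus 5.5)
We prove the displayed identity by comparing both sides at a fixed $s_0\in\mathbb C$, establishing the two inequalities separately. Write $d=\operatorname{pord}_{s_0}L(s,\pi,\operatorname{As})$ and $M$ for the maximum on the right. Each constituent $\pi_i^{(q)}$ with $0\leq q<n$ is, by Proposition~\ref{explicit-derivative}, of the form $\pi_I$ with $|I|=n-q\geq1$ and in general position. So $d_{\mathrm{ex}}(s_0;\pi_i^{(q)})$ is defined. Moreover, by Corollary~\ref{cor:exceptional-pole-order} applied in rank $n-q$, it equals either $0$ or $\operatorname{pord}_{s_0}L(s,\pi_i^{(q)},\operatorname{As})$, according as $s_0$ is or is not an exceptional pole of $\pi_i^{(q)}$.

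For the inequality $M\leq d$, take any $(q,i)$ with $d_{\mathrm{ex}}(s_0;\pi_i^{(q)})>0$. Then $s_0$ is an exceptional pole of $\sigma=\pi_i^{(q)}\subseteq\pi^{(q)}$, of order $d_{\mathrm{ex}}(s_0;\sigma)$ by the remark above. Theorem~\ref{order-characterization}\ref{order-characterization-2} applies with $n-q$ in place of $q$, since $1\leq n-q\leq n$. It gives $d\geq d_{\mathrm{ex}}(s_0;\sigma)$. Terms equal to $0$ are trivially bounded by $d$.

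For $d\leq M$, we may assume $d\geq1$. Theorem~\ref{order-characterization}\ref{order-characterization-1} then produces $q'$ with $d\leq q'\leq n$ and a constituent $\sigma\subseteq\pi^{(n-q')}$ such that $s_0$ is an exceptional pole of $\sigma$ and $\operatorname{pord}_{s_0}L(s,\sigma,\operatorname{As})=d$. Since $q'\geq d\geq1$, the derivative index $n-q'$ lies in $[0,n)$, so $\sigma$ is one of the $\pi_i^{(n-q')}$ in the indexed decomposition. Here we use that the summands are irreducible and pairwise nonisomorphic, so "constituent" means "summand". Hence $d_{\mathrm{ex}}(s_0;\sigma)=\lceil q'/2\rceil$. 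This equals the pole order $d$ by Corollary~\ref{cor:exceptional-pole-order} applied to $\sigma$, giving $M\geq d$. When $d=0$, every term of the maximum is $0$ by the first inequality, so equality holds.

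The main point needing care is matching the indexing. The derivative index $q$ in the corollary corresponds to rank $n-q$ of the constituent, while Theorem~\ref{order-characterization} is stated in terms of the rank $q$. The condition $d\geq1$ is exactly what excludes the rank-zero constituent $\pi^{(n)}=\mathbb C$, which is not in the range $0\leq q<n$. We also need $d_{\mathrm{ex}}$ to coincide with the actual pole order at exceptional poles, which is what Corollary~\ref{cor:exceptional-pole-order} supplies. Everything else is bookkeeping.
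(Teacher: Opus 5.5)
Your proof is correct and follows the paper's argument: each term of the maximum is bounded by the pole order via Theorem~\ref{order-characterization}$(ii)$, and the reverse inequality comes from the constituent produced in Theorem~\ref{order-characterization}$(i)$. You also spell out two points the paper leaves implicit: that $d\geq1$ forces the derivative index $n-q'$ into $[0,n)$, and that Corollary~\ref{cor:exceptional-pole-order} identifies $d_{\mathrm{ex}}$ with the actual pole order. One small slip is that in your first paragraph the phrase ``according as $s_0$ is or is not an exceptional pole'' has the two cases reversed (the value is the pole order when $s_0$ is exceptional and $0$ otherwise), though you use it correctly afterwards.
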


\begin{proof}
The inequality
\[
\operatorname{pord}_{s_0}
L(s,\pi,\operatorname{As})
\geq
d_{\mathrm{ex}}
\left(
s_0;
\pi_i^{(q)}
\right)
\]
for every $q,i$ follows from
Theorem~\ref{order-characterization}\ref{order-characterization-2}. If $L(s,\pi,\operatorname{As})$ is holomorphic at $s_0$, then
Theorem~\ref{order-characterization}\ref{order-characterization-2}
shows that no derivative constituent can have an exceptional
pole at $s_0$. Hence every term in the maximum is zero. If it has a pole of order $d$, then
Theorem~\ref{order-characterization}\ref{order-characterization-1}
produces a derivative constituent for which the exceptional
pole order is exactly $d$. Hence equality holds.
\end{proof}

\begin{proof}[Proof of Theorem~\ref{1.2}]
Put
\[
H(s)=\underset{\substack{0\leq q<n\\i}}{\operatorname{l.c.m.}}
\left\{L_{\mathrm{ex}}(s,\pi_i^{(q)},\operatorname{As})^{-1}\right\}.
\]
By Propositions~\ref{Lex-pole-orders} and
\ref{prop:lcm-characterization}, and
Corollary~\ref{max-exceptional-order},
\[
\begin{aligned}
\operatorname{ord}_{s_0}H
&=\max_{q,i}d_{\mathrm{ex}}(s_0;\pi_i^{(q)})\\
&=\operatorname{pord}_{s_0}L(s,\pi,\operatorname{As})\\
&=\operatorname{ord}_{s_0}L(s,\pi,\operatorname{As})^{-1}
\end{aligned}
\]
for every $s_0\in\mathbb C$. Both functions are finite
products of inverse $\Gamma_{\mathbb R}$-factors. The uniqueness
assertion in Proposition~\ref{prop:lcm-characterization}
therefore gives $H(s)=L(s,\pi,\operatorname{As})^{-1}$.
\end{proof}



\begin{funding}
The author is supported by the Global-Learning \& Academic research
institution for Master’s·PhD students, and Postdocs (G-LAMP) Program
of the National Research Foundation of Korea (NRF), funded by the
Ministry of Education (No.~RS-2025-25442252).
\end{funding}

\end{section}

\end{document}